\newtheorem{theorem}{Theorem}[section]
\newtheorem{prop}[theorem]{Proposition}
\newtheorem{lemma}[theorem]{Lemma}
\newtheorem{corr}[theorem]{Corollary}
\newtheorem{definition}[theorem]{Definition}
\def\R{{\mathbb R}} 
\def\N{{\mathbb N}} 
\begin{document}
\overfullrule=0pt
\baselineskip=24pt
\font\tfont= cmbx10 scaled \magstep3
\font\sfont= cmbx10 scaled \magstep2
\font\afont= cmcsc10 scaled \magstep2
\title{\tfont A multi-dimensional continued fraction generalization\\
of Stern's  diatomic sequence}
\bigskip
\bigskip
\author{Thomas Garrity\\  
Department of Mathematics and Statistics\\ Williams College\\ Williamstown, MA  01267\\ 
email:tgarrity@williams.edu}

\date{}

\maketitle
\begin{abstract}

Continued fractions are linked to Stern's diatomic sequence $0,1,1,2,1,3,2,3,1,4,\ldots $  (given by the recursion relation $\alpha_{2n}=\alpha_n$ and $\alpha_{2n+1} = \alpha_n + \alpha_{n+1},$ where $\alpha_0=0$ and $\alpha_1=1$), which has long been known.  Using a particular multi-dimensional continued fraction algorithm (the Farey algorithm), we will generalize the diatomic sequence to a sequence of numbers  that quite naturally should be   called Stern's  triatomic sequence (or a two-dimensional Pascal's sequence with memory).  As continued fractions and the diatomic sequence can be thought of as coming from a  systematic subdivision of the unit interval, this new triatomic sequence will arise by a systematic subdivision of a triangle.  We will discuss some of the algebraic properties for the triatomic sequence.   


\end{abstract}

\section{The Classical Stern Diatomic Sequence}
\begin{singlespacing}

The goal of this paper is to take a particular generalization of continued fractions (the Farey algorithm) and find the corresponding generalization of Stern's diatomic sequence, which we will call  Stern's triatomic sequence.
   In this first  section, though,  we will 
 quickly review the basics of  Stern's  diatomic sequence. Hence experts should skip to the next section. 
In  section \ref{sec:Stern's Triatomic sequence} we give a  formal presentation  of Stern's triatomic sequence.  Section \ref{sec:Stern's Triatomic Sequence and Triangles} shows how Stern's triatomic sequence can be derived from a systematic subdivision of a triangle.  Section \ref{sec:The Farey Map}
 shows how Stern's triatomic sequence arises from the multi-dimensional continued fraction stemming from the Farey map, in analog to the link between Stern's diatomic sequence and classical continued fractions.   Sections \ref{sec:Combinatorial Properties of Tri-Atomic Sequences} and \ref{sec:Paths of a directed graph} discuss some of the properties for this sequence.  Many of these properties are analogs of corresponding properties for Stern's diatomic sequence.  We will conclude with open questions.

 A recent Monthly article by Northshield \cite{Northshield1} gives a good overview of Stern's diatomic sequence and, more importantly for this paper, its relation to continued fractions.  In much earlier work, 
Lehmer \cite{Lehmer1} lists a number of properties of this sequence.  In part, our eventual  goal is  showing how these properties usually generalize for our new Stern's triatomic sequences.

Consider an interval, with endpoints weighted by numbers  $v_1$ and $v_2$:

\begin{center}
\begin{tikzpicture}
\draw[|-|] (0,0)--(5,0);
\node[below] at (0,0){$v_1$};
\node[below] at (5,0){$v_2$};
   \end{tikzpicture}
   \end{center}

Then subdivide:

\begin{center}
\begin{tikzpicture}
\draw[|-|] (0,0)--(2.5,0);
\draw[|-|] (5,0)--(2.5,0);
\node[below] at (0,0){$v_1$};
\node[below] at (5,0){$v_2$};
\node[below] at (2.5,0){$v_1 + v_2$};
   \end{tikzpicture}
   \end{center}

We can continue, getting:

\begin{center}
\begin{tikzpicture}
\draw[|-|] (0,0)--(1.66,0);
\draw[|-|] (1.66,0)--(2.5,0);
\draw[|-|] (3.38,0)--(2.5,0);
\draw[|-|] (3.38,0)--(5,0);

\node[below] at (0,0){$v_1$};
\node[below] at (5,0){$v_2$};
\node[below] at (2.5,0){$v_1 + v_2$};
\node[above] at (1.66,0){$2v_1+v_2$};
\node[above] at (3.38,0){$v_1+2v_2$};

 \end{tikzpicture}
   \end{center}

The traditional Stern diatomic sequence is doing this for the case when $v_1=v_2=1.$  If instead we start with vectors 
$$v_1= \left( \begin{array}{c} 0 \\ 1 \end{array} \right) ,\; v_2=\left( \begin{array}{c} 1 \\ 1 \end{array} \right) $$
and do the standard identification of 
$$\left( \begin{array}{c} x \\ y \end{array} \right) \rightarrow \frac{x}{y},$$
we get the Farey decomposition of the unit interval, which in turn can be used to recover the continued fraction expansion of any real number on the unit interval.   The case of letting $v_1=v_2=1$ is equivalent  to keeping track of the denominators for the Farey decomposition.  (We mention this as motivation for why we later link our new Stern's triatomic sequence to a particular multi-dimensional continued fraction algorithm.)

Thus  Stern's diatomic sequence is given by the recursion formulas
$$\alpha_0= 0, \alpha_1=1$$
and
\begin{eqnarray*}
\alpha_{2n}    &=& \alpha_n \\
\alpha_{2n+1} &=& \alpha_n + \alpha_{n+1}
\end{eqnarray*}
The first few terms are 
$$0,1,1,2,1,3,2,3,1,4,3,5,2,5,4,\ldots.$$
It is standard to write this as

$$ \begin{array} {ccccccccc}
\alpha_1 &  &  &  &  &  &  &  & \alpha_2 \\
\alpha_2 &  &  &  & \alpha_3 &  &  &  & \alpha_4 \\
\alpha_4 &  & \alpha_5 &  & \alpha_6 &  & \alpha_7 &  & \alpha_8 \\
\alpha_8 & \alpha_9 & \alpha_{10} & \alpha_{11} & \alpha_{12} & \alpha_{13} & \alpha_{14} & \alpha_{15} & \alpha_{16} \\ 
\end{array}$$

For $\alpha_1=1$, we get 
$$ \begin{array} {ccccccccccccccccc}
1 &&  && & &  &&  &&  &&  &&  && 1 \\
1 &&  &&  &&  && 2 &&  &&  &&  && 1 \\
1 &&  && 3 &&  && 2 &&  && 3 &&  && 1 \\
1 && 4 && 3 && 5 && 2 && 5 && 3 && 4 && 1 \\ 
1 &5& 4 &7& 3 &8& 5 &7& 2 &7& 5 &8& 3 &7& 4 &5& 1 \\ 
\end{array}$$

Thus to create the next level, we simply interlace the original level with the sum of adjacent entries.  This is why this sequence is called by Knauf \cite{Knauf1} {\it Pascal with Memory}.  (It was Knauf's work on linking this sequence with statistical mechanics, which is also in \cite{Kelban-Ozluk1, Knauf5, Knauf2, Knauf3, Knauf4}, that led the author to look at these sequences.  Other work linking statistical mechanics with Stern diatomic sequences, though these works do not use the rhetoric of Stern  diatomic sequences, include   \cite{Contucci-Knauf1, Esposti-Isola-Knauf1, Fiala-Kleban1, Fiala-Kleban-Ozluk1, Garrity10, Guerra-Knauf1, Kallies-Ozluk-Peter-Syder1, Kelban-Ozluk1, Mayer2, MendesFrance-Tenenbaum1, MendesFrance-Tenenbaum2, Prellberg1, Prellberg-Fiala-Kleban1, Prellberg-Slawny1})

A matrix approach for  Stern's diatomic sequence $\alpha_1, \alpha_2, \ldots $.  
is as follows.  Set 

$$b_0=  \left(  \begin{array}{cc} 1 & 1 \\ 0 & 1 \end{array}   \right), b_1= \left(  \begin{array}{cc}  1 & 0 \\ 1 & 1 \end{array}   \right)$$
and 
$$M= \left(  \begin{array}{cc} 0 & 1 \\ 1 & 1 \end{array}    \right).$$
Let $N >2$ be a positive integer.  Then there is a tuple $(i_k, \ldots, i_1)_S$ with $k$ a positive integer and each $i_j\in \{0,1\}$ such that 
$$N=2^{k} + 1 + i_{k}2^{k-1}+\cdots + i_1.$$
(The subscript $S$  stands for  `Stern'.) For example $3 = 2^1 + 1 + 0\cdot 1 = (0)_S, 
4 = 2+ 1  + 1\cdot 1 = (1)_S, 
5 = 2^2 + 1+ + 0 \cdot 2^1 + 0\cdot 1 =  (0,0)_S $,  $6=2^2 +1+  0\cdot 2 + 1\cdot 1 = (0,1)_S$,  $7= 2^2 + 1+ 1\cdot 2 + 0 \cdot 1 = (1,0)_S$, $8=(1,1)_S$  and 
$16 = 2^3 + 1 + 1 \cdot 2^2 + 1\cdot 2^1 + 1\cdot 1= (1,1, 1)_S.$
Then we have $$\alpha_N = \left(  \begin{array}{cc} 0 & 1 \end{array} \right) Mb_{i_k}\cdots b_{i_1}   \left(  \begin{array}{c} 0 \\ 1 \end{array} \right) .$$

This is just systemizing the subdivision of the unit interval via Farey fractions.  As described by the author \cite{Garrity10}, these matrices can be used to find the continued fraction expansion for any real number, which is in part why one would expect  any natural generalization of Stern's diatomic sequence to be linked with a generalization of continued fractions.

Finally, Allouche and Shallit \cite{Allouche-Shallit92, Allouche-Shallit03} have shown that  Stern's diatomic sequence is a $2$-regular sequence.

\section{Stern's Triatomic sequence} \label{sec:Stern's Triatomic sequence}
We now start with explaining the new sequence of this paper.
Start with an initial triple (which we will also think of as a $1\times 3$ row matrix)
$$\triangle = (v_1, v_2, v_3) = (1,1,1).$$
We are using the notation $\triangle$ to suggest the vertices of a triangle. For any index
$$I=(i_1, i_2, \ldots, i_n),$$
with each $i_k$ being zero, one or two, we will associate a triple which we denote by $\triangle(I).$ 

Suppose we know the triple
$$\triangle(I)=(v_1(I),v_2(I),v_3(I)).$$
Consider the three matrices:
$$A_0=  \left(  \begin{array}{rrr}    1  &  0  &  1 \\ 0  & 1 & 1 \\ 0  & 0 & 1 \end{array} \right),   A_1=   \left(  \begin{array}{rrr}    0  &  0  &  1 \\ 1  & 0 & 1 \\ 0  & 1 & 1 \end{array} \right)   A_2=   \left(  \begin{array}{rrr}    0  &  1 &  1 \\ 0  & 0 & 1 \\ 1 & 0 & 1 \end{array} \right).$$
Then we set 
$$\begin{array}{ccccc}
\triangle (I,0)  &=&  \triangle (I) A_0  &=&(v_1(I),v_2(I),v_3(I)) A_0  \\
\triangle (I,1)  &=&  \triangle (I) A_1  &=&(v_1(I),v_2(I),v_3(I)) A_1 \\
\triangle (I,2)  &=&  \triangle (I) A_2  &=&(v_1(I),v_2(I),v_3(I)) A_2.
\end{array}$$
Thus we have 

$$\begin{array}{rclrclrcl}
v_1(I,0)   &=& v_1(I), & v_2(I,0)   &=& v_2(I),  &
v_3(I,0)   &=& v_1(I)  + v_2(I)  + v_3(I) \\
v_1(I,1)   &=& v_2(I), &
v_2(I,1)   &=& v_3(I), &
v_3(I,1)   &=& v_1(I)  + v_2(I)  + v_3(I) \\
v_1(I,2)   &=& v_3(I), &
v_2(I,2)   &=& v_1(I), &
v_3(I,2)   &=& v_1(I)  + v_2(I)  + v_3(I).
\end{array}$$
Thus for any $I=(i_1,i_2, \ldots, i_n)$, we have 
\begin{eqnarray*}
\triangle(I)  &=& (v_1(I),v_2(I),v_3(I)) \\
&=& (1,1,1)A_{i_1}\cdots A_{i_n} \\
&=& (v_1,v_2,v_3))A_{i_1}\cdots A_{i_n} \\
&=& \triangle A_{i_1}\cdots A_{i_n} 
\end{eqnarray*}

For $I=(i_1, \ldots i_n)$ and $J=(j_1, \ldots , j_m)$, we say that $I<J$ if $n<m$ or, when $n=m$, there is an integer $k$ such that 
$1\leq k \leq n$ and      $i_l=j_l$, for all integers $1\leq l <k$, and $i_k<j_k.$  Thus we have the ordering
$$(0)<(1)<(2)<(00)<(01)<(02)< (10)<(11)<\cdots <(000) <(001) < \cdots  $$and hence an ordering of triples
$$\triangle    , \triangle(0), \triangle(1), \triangle(2),\triangle(00), \triangle(01), \triangle(02),\triangle(10),\dots, \triangle(000),\triangle(001), \ldots  . $$
For each triple $\triangle(I)$, we have the ordered list   of three numbers $v_1(I),v_2(I), v_3(I).$  Stern's triatomic sequence is simply putting in these three numbers for each triple $\triangle(I)$:

\begin{definition} {\rm Stern's triatomic sequence} is the sequence formed from all of the triples $\triangle(I)$ using the above ordering on the indices $I$ and is hence 
$v_1,v_2, v_3$,  $ v_1(0),v_2(0), v_3(0)   $,    $v_1(1),v_2(1)  $,    $v_3(1),v_1(2),v_2(2), v_3(2)   $,   $v_1(00),v_2(01), v_3(02), \ldots $.

\end{definition}
Thus the first terms are 
$    1, 1,1$,$
1,1,3  $, $  1,1,3  $,$ 1,1,3,
    1,1,5$,  $ 1, 3, 5  $, $  3, 1, 5           $,  $             1,1,5  $, $ 1, 3, 5    $, $ 3, 1, 5        $,  $    1,1,5  $, $1, 3, 5 $,  $    3, 1, 5.$

    \begin{definition}  If $I= (i_1, \ldots , i_n)$, then the three numbers $v_1(I),v_2(I), v_3(I)$ are said to be in \rm{level $n$}.
    \end{definition}
    
    Thus the level $0$ elements are $v_1,v_2,v_3$, or simply $1,1,1.$
   The level $1$ elements of the triatomic sequence  are $v_1(0),v_2(0), v_3(0), v_1(1),v_2(1), v_3(1),v_1(2),v_2(2), v_3(2)$ and hence the nine numbers
  $$1,1,3,1,1,3,1,1,3,$$
    while the level $2$ elements come from  the nine triples 
    $$\triangle(00),\triangle(01),\triangle(02),\triangle(10),\triangle(11),\triangle(12),\triangle(20),\triangle(21),\triangle(22)$$
   and hence are  the $27$ numbers    $$1,1,5,  1, 3, 5,    3, 1, 5,                  1,1,5, 1, 3, 5,  3, 1, 5,         1,1,5,  1, 3, 5,    3, 1, 5.$$
   The naturalness of ``levels''  will become more apparent in the next section when we describe the triatomic sequence in terms of a partitioning of a triangle.   Further, in section \ref{sec:Combinatorial Properties of Tri-Atomic Sequences}, we will see that many of the combinatorial properties of the triatomic sequence deals with which numbers are in different levels.
    
    We can write out  Stern's triatomic sequence via a recursive formula, which will take a bit to define.
Start with three numbers $a_1,a_2,a_3,$  which for us are the three numbers $1,1,1$.
For each positive integer $N \geq 4$,  there is a unique tuple $I=(i_1,i_2, \ldots , i_n)$, with each $i_j$ a zero, one or two, and a $k$ chosen to be as  large as possible from $k\in \{1,2,3\} $ such that 
\begin{eqnarray*}
N&=& 3(1 + 3 + 3^2 + \ldots + 3^{n-1}) + i_1 3^n + i_{2}3^{K-1} + \cdots + i_n 3 + k\\
&=& \frac{3(3^n-1)}{2} + i_1 3^n + i_{2}3^{n-1} + \cdots + i_n 3 + k.
\end{eqnarray*}
Define the   function
$\tau$ from $\mbox{tuples}\times \{1,2,3\} $ to  positive integers greater than three   by setting
$$\tau (I:k) =\tau (j_n,\ldots, j_1;k) = 3(1 + 3 + 3^2 + \ldots + 3^{n-1}) + j_n 3^n + j_{n-1}3^{n-1} + \cdots + j_1 3 + k.$$
Then we have for example that 
$$\tau(0;2)= 3 + 0\cdot 3 + 2 = 5$$
while 
$$\tau(2,1;3) = 3(1+ 3) + 2\cdot 3^2 + 1\cdot 3 + 3 = 36.$$
We extend the map $\tau$ to when we have the ``empty'' tuple, namely we define
$$\tau(\emptyset; 1) = 1, \tau(\emptyset; 2) = 2, \tau(\emptyset; 3) = 3.$$

We  write our eventual sequence  $a_1,a_2, a_3, \ldots $  as
$$a_{\tau(\emptyset; 1)}, a_{\tau(\emptyset; 2)}, a_{\tau(\emptyset; 3)}$$
$$a_{\tau(0;1)},a_{\tau(0;2)}, a_{\tau(0;3)},  a_{(1;1)},  a_{\tau(1;2)},  a_{(\tau1;3)},  a_{\tau(2;1)}, a_{\tau(2;2)}, a_{\tau(2;3)}$$
$$\vdots$$

We define the following three functions 
$$\tau_1:\N \rightarrow \N, \tau_2:\N \rightarrow \N, \tau_3:\N \rightarrow \N$$
 by setting
$$\begin{array}{ccccc}
\tau_1(N)  &=& \tau_1(\tau(j_n, \ldots , j_1; k))
&=& \tau(j_n, \ldots, j_2; k)\\
\tau_2(N)  &=& \tau_2(\tau((j_n, \ldots , j_1; k))
&=&\tau ( j_n, \ldots, j_2,;2) \\
\tau_3(N)  &=& \tau_3(\tau( j_n, \ldots , j_1; k))
&=& \tau( j_n, \ldots, j_2;3),
\end{array}$$
for
$$N=\tau(j_n, \ldots , j_1; k).$$
\begin{definition} Given three numbers $a_1,a_2$ and $a_3$, the corresponding {\rm Stern triatomic sequence} is 
$$a_N =\begin{cases} a_{\tau_1(N) },  &\mbox{if} \quad  j_1=0, l=1;  \\ 
 a_{\tau_2   (N) },  &\mbox{if} \quad  j_1=0, l=2;   \\
  a_{\tau_1   (N) } + a_{\tau_2   (N) } + a_{\tau_3   (N) },  &\mbox{if} \quad  j_1=0, l=3 ; \\
  a_{\tau_2(N) },  &\mbox{if}\quad  j_1=1, l=1;  \\ 
 a_{\tau_3   (N) },  &\mbox{if} \quad  j_1=1, l=2 ;  \\
  a_{\tau_1   (N) } + a_{\tau_2   (N) } + a_{\tau_3   (N) } , &\mbox{if} \quad  j_1=1, l=3; \\
    a_{\tau_3(N) },  &\mbox{if} \quad  j_1=2, l=1 ; \\ 
 a_{\tau_1   (N) } , &\mbox{if} \quad  j_1=2, l=2  ; \\
  a_{\tau_1   (N) } + a_{\tau_2   (N) } + a_{\tau_3   (N) } , &\mbox{if} \quad  j_1=2, l=3 .
    \end{cases}$$
     \end{definition}

     Of course, we want to make sure that this definition agrees with our earlier one.  Luckily the following proposition is a straightforward calculation.
     \begin{prop} We have for $N=\tau(I;k)$ that 

   $$a_N = v_I(k).$$
   \end{prop}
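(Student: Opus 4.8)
The plan is to prove the identity $a_N = v_I(k)$ by induction on the level $n$, where $N = \tau(I;k)$ with $I = (i_1,\ldots,i_n)$. The base case $n = 0$ covers $N \in \{1,2,3\}$: here the empty-tuple convention gives $\tau(\emptyset;k) = k$, and by definition $a_1 = a_1$, $a_2 = a_2$, $a_3 = a_3$, which match $v_1,v_2,v_3$ of the initial triple $\triangle = (v_1,v_2,v_3)$. (Strictly, the recursive definition is stated only for $N \geq 4$, so the cases $N = 1,2,3$ are the initial data on both sides and agree by fiat.)

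For the inductive step, fix $N = \tau(j_n,\ldots,j_1;k)$ at level $n \geq 1$ and assume the claim holds for all indices at level $n-1$. The key observation linking the two definitions is bookkeeping: if $I = (i_1,\ldots,i_n)$ then in the $\tau$-notation the last-appended digit is $j_1 = i_n$, and the functions $\tau_1,\tau_2,\tau_3$ strip off exactly this digit, producing indices $\tau(j_n,\ldots,j_2;k')$ at level $n-1$ corresponding to the shorter tuple $I' = (i_1,\ldots,i_{n-1})$ with $\tau_1(N)$ keeping the slot $k$, $\tau_2(N)$ using slot $2$, and $\tau_3(N)$ using slot $3$. I would first verify carefully that $\tau_1(N), \tau_2(N), \tau_3(N)$ are well-defined and lie at level $n-1$, and that they correspond to $v_{I'}(k)$, $v_{I'}(2)$, $v_{I'}(3)$ respectively — for $\tau_1$ this needs a small check that slot $k$ is still a legal slot value in $\{1,2,3\}$ (it is, since $k$ was). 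By the induction hypothesis, $a_{\tau_1(N)} = v_{I'}(k)$, and similarly $a_{\tau_2(N)} = v_{I'}(2)$, $a_{\tau_3(N)} = v_{I'}(3)$.

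Now I would match the nine cases of the recursive definition of $a_N$ against the nine entries of the matrices $A_0, A_1, A_2$. Writing $\triangle(I) = \triangle(I')A_{i_n}$, the component $v_{I}(k) = v_{(I',i_n)}(k)$ is obtained by reading column $k$ of $A_{i_n}$: e.g. for $i_n = j_1 = 0$, column $1$ of $A_0$ is $(1,0,0)^T$ so $v_I(1) = v_{I'}(1)$; but wait — the recursion says $a_N = a_{\tau_1(N)}$ when $j_1 = 0, l = 1$, and I claimed $a_{\tau_1(N)} = v_{I'}(k) = v_{I'}(1)$, matching. Column $2$ of $A_0$ is $(0,1,0)^T$ so $v_I(2) = v_{I'}(2) = a_{\tau_2(N)}$, matching the $j_1 = 0, l = 2$ case. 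Column $3$ of $A_0$ is $(1,1,1)^T$ so $v_I(3) = v_{I'}(1) + v_{I'}(2) + v_{I'}(3)$, matching the sum $a_{\tau_1(N)} + a_{\tau_2(N)} + a_{\tau_3(N)}$ — here I need $\tau_1(N)$ to carry slot $1$, which it does precisely because $k = l = 1$ in this sub-case. The analogous column reads of $A_1$ and $A_2$ dispatch the remaining six cases; the permutation structure of $A_1$ (slots $2,3,1{+}2{+}3$) and $A_2$ (slots $3,1,1{+}2{+}3$) lines up exactly with the displayed case list.

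The main obstacle — really the only subtle point — is the indexing/normalization in the $\tau_1$ case when $l = 1$ versus $l = 2$ or $l = 3$: one must check that $\tau_1$, as defined via "keep slot $k$," produces the correct slot for the first component of each $A_{i_n}$, and that the displayed case definition's use of $\tau_1(N)$ in the three "sum" rows genuinely refers to slot $1$ (which forces attention to whether $N = \tau(I;k)$ had $k = 1$, $2$, or $3$). Once the correspondence "slot of $\tau_j(N)$" $\leftrightarrow$ "row index $j$ of the column being read" is pinned down, the proof is the routine case-check the author advertises, so I would present it as: (1) base case, (2) the $\tau$-digit-stripping lemma identifying $\tau_1(N),\tau_2(N),\tau_3(N)$ with level-$(n-1)$ indices for $I'$, (3) invoke the induction hypothesis, (4) expand $\triangle(I) = \triangle(I')A_{i_n}$ column by column and observe the nine equalities.
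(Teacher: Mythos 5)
Your overall plan --- induction on the level, identifying $\tau_1(N),\tau_2(N),\tau_3(N)$ as the level-$(n-1)$ indices for the truncated tuple $I'$, and matching the nine cases of the recursive definition against the columns of $A_0,A_1,A_2$ --- is exactly the ``unraveling of the definitions'' the paper alludes to; the paper itself supplies no further detail, so your write-up is a correct filling-in of the intended argument rather than a different route.

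One point, however, is left genuinely unresolved, and the justification you do offer for it is wrong as stated. You describe $\tau_1(N)$ as ``keeping the slot $k$,'' following the paper's displayed formula $\tau_1(\tau(j_n,\ldots,j_1;k))=\tau(j_n,\ldots,j_2;k)$, and then in the sum row you assert that $\tau_1(N)$ carries slot $1$ ``precisely because $k=l=1$ in this sub-case.'' But in the three sum rows one has $l=k=3$, so under the literal definition $\tau_1(N)=\tau(j_n,\ldots,j_2;3)=\tau_3(N)$ and the sum $a_{\tau_1(N)}+a_{\tau_2(N)}+a_{\tau_3(N)}$ would equal $v_{I'}(2)+2v_{I'}(3)$, not $v_{I'}(1)+v_{I'}(2)+v_{I'}(3)$; the same literal reading also breaks the case $j_1=2$, $l=2$, where $a_{\tau_1(N)}$ would give $v_{I'}(2)$ instead of the required $v_{I'}(1)$. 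The resolution is that the paper's formula for $\tau_1$ contains a typo: consistent with $\tau_2$ and $\tau_3$, it must be $\tau_1(N)=\tau(j_n,\ldots,j_2;1)$, i.e.\ slot $1$ always, independent of $k$. With that correction every one of your nine column reads checks out and the induction closes; without it, the step you flag as ``the only subtle point'' is not merely subtle but false, so you should state the corrected definition explicitly rather than leave it as something to be ``pinned down.''
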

   
  The proof is just an unraveling of the definitions.

We also have a description for  the series $\sum a_Nx^N.$
Start with  three column vectors  $v_1, v_2, v_3$  in $\R^3$   and put them into the matrix
$$V=   \left(  \begin{array}{ccc}  v_1 & v_2 & v_3 \end{array} \right)  .$$

Set 
$$P(x) = A_0 + xA_1 + x^2 A_2.$$
Let $e_i$ be the standard three by one column vector with entries all zero save in the ith entry, which is one.  
\begin{theorem}When 
$$v_1 = \left( \begin{array}{c} 0 \\ 0 \\ 1 \end{array}  \right) , v_2 = \left( \begin{array}{c} 1 \\ 0 \\ 1 \end{array}  \right) , v_3 = \left( \begin{array}{c} 1 \\ 1 \\ 1 \end{array}  \right), $$ then
$\sum a_Nx^N $ is
$$e_3^T  \cdot V\cdot \left[I + \sum_{k=1}^{\infty}x^{\frac{3( 3^{k}-1)}{2} }P\left( x^{3^k} \right) P\left( x^{3^{k-1}}\right)\cdots  P\left( x^{3}  \right)  \right] \left( xe_1+ x^2e_2+ x^3  e_3 \right).$$
\end{theorem}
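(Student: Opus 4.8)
The plan is to organize the sum $\sum a_N x^N$ by levels, since the recursive description already groups the indices $N$ according to the length $n$ of the tuple $I$ with $N=\tau(I;k)$. By the preceding proposition, $a_N = v_I(k)$, and by the matrix description in Section~\ref{sec:Stern's Triatomic sequence} we have $\triangle(I) = \triangle A_{i_1}\cdots A_{i_n}$, so the three numbers $v_1(I),v_2(I),v_3(I)$ in level $n$ are exactly the entries of the row vector $(v_1,v_2,v_3)A_{i_1}\cdots A_{i_n}$. With the chosen starting vectors, $\triangle = (v_1,v_2,v_3)$ is read off as $e_3^T V$ applied appropriately; more precisely I would first check the base case $n=0$, matching the three terms $a_1,a_2,a_3 = x e_1 + x^2 e_2 + x^3 e_3$ paired against $e_3^T V$, which reproduces the $I$ term (the ``$I$'' in the bracket).

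Next I would pin down the exponent bookkeeping. For a fixed level $n$, the indices occurring are $N = \tau(i_1,\ldots,i_n;k) = \frac{3(3^n-1)}{2} + i_1 3^n + i_2 3^{n-1} + \cdots + i_n 3 + k$ for $i_j \in \{0,1,2\}$ and $k\in\{1,2,3\}$. The key observation is that the contribution of level $n$ to $\sum a_N x^N$ factors as
$$x^{\frac{3(3^n-1)}{2}} \sum_{i_1,\ldots,i_n} x^{i_1 3^n + \cdots + i_n 3}\, (e_3^T V A_{i_1}\cdots A_{i_n})\,(x e_1 + x^2 e_2 + x^3 e_3),$$
because $a_N = v_I(k)$ is the $k$-th entry of that row vector and $x^k$ is supplied by the final column factor. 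Now distribute: $\sum_{i_1}x^{i_1 3^n}A_{i_1} = A_0 + x^{3^n}A_1 + x^{2\cdot 3^n}A_2 = P(x^{3^n})$, and similarly for each subsequent index, the $j$-th matrix slot producing $P(x^{3^{n-j+1}})$. So the level-$n$ contribution is exactly
$$e_3^T V\, \Bigl[ x^{\frac{3(3^n-1)}{2}} P(x^{3^n}) P(x^{3^{n-1}})\cdots P(x^3)\Bigr] (x e_1 + x^2 e_2 + x^3 e_3),$$
which is the $k=n$ summand inside the bracket of the theorem. Summing over all $n\ge 0$ (with $n=0$ giving the identity term) yields the stated formula.

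The main obstacle is getting the index-reversal and the ordering of the $P$ factors right: in the definition of $\tau$ the tuple is written $(j_n,\ldots,j_1;k)$ with $j_1$ the ``innermost'' digit driving the recursion, while in $\triangle(I)=\triangle A_{i_1}\cdots A_{i_n}$ the matrices are multiplied in the order $i_1,\ldots,i_n$; one must verify that the digit carrying weight $3^n$ in $N$ is the one multiplying $A$ on the left, so that $\sum x^{i_1 3^n} A_{i_1}$ legitimately collapses to $P(x^{3^n})$ and the product comes out as $P(x^{3^n})\cdots P(x^3)$ rather than in the reverse order. Once the dictionary between the $\tau$-encoding and the matrix product is checked carefully on low levels (say $n=1$ and $n=2$, where the explicit terms $1,1,3$ and $1,1,5,\,1,3,5,\,3,1,5$ are already listed), the general case is a routine induction, and the interchange of the (finite, at each level) sums with the matrix multiplication is purely formal.
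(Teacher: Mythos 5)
Your proposal is correct and rests on the same mechanism as the paper's proof: expanding $P(x^{3^n})P(x^{3^{n-1}})\cdots P(x^3)$ into $\sum_{i_1,\ldots,i_n} x^{i_1 3^n+\cdots+i_n 3} A_{i_1}\cdots A_{i_n}$ and matching exponents against the $\tau$-encoding of $N$, with the prefactor $x^{3(3^n-1)/2}$ supplying the level offset. The only difference is presentational: the paper verifies levels $0$, $1$, $2$ (and sketches $3$) explicitly and declares the rest similar, whereas you state the general level-$n$ identity directly, which is the natural completion of the same argument.
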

(For this  theorem, we are only interested in the bottom row of the matrix  $V$, which is $\triangle = (1,1,1)$.  The matrix $V$, though, will later be important when we link Stern's triatomic sequence to the Farey multidimensional continued fraction.)

\begin{proof}
The proof is again an unraveling of the notation, as follows.

Multiplying any three-by-three matrix by $ e_3^T $ on the left picks out the bottom row of the matrix.  For any tuple $I=(i_1, \ldots, i_n)$, with each $i_k$  a $0$, $1$ or $2$, we have 
$$(v_1(I),v_2(I),v_3(I)) = (1,1,1) A_{i_1}\cdots A_{i_n} = e_3^T  \cdot V A_{i_1}\cdots A_{i_n}.$$
In particular, $$e_3^T  \cdot V = (v_1,v_2,v_3)= \triangle.$$
Then 
\begin{eqnarray*}
e_3^T  \cdot V \cdot I \cdot  \left( xe_1+ x^2e_2+ x^3  e_3 \right) &=&   v_1 x+ v_2 x^2 + v_3x^3 \\
&=&a_1x+a_2x^2+a_3x^3,
\end{eqnarray*}
which are the desired first three terms in the series.
Similarly,
$$e_3^T  \cdot V \cdot P(x^3) \cdot  \left( xe_1+ x^2e_2+ x^3  e_3 \right) $$  is 
$$e_3^T  \cdot V \cdot (A_0 + x^3A_1 + x^6 A_2) \cdot  \left( xe_1+ x^2e_2+ x^3  e_3 \right),$$
which in turn equals
$$ (\triangle(0) + x^3\triangle(1) + x^6\triangle(2))    \left( xe_1+ x^2e_2+ x^3  e_3 \right).$$ Thus
\begin{eqnarray*}
e_3^T  \cdot V \cdot P(x^3) \cdot  \left( xe_1+ x^2e_2+ x^3  e_3 \right) 
&=&v_1(0)x + v_2(0) x^2 + v_3(0)x^3 \\
&&+ v_1(1)x^4 + v_2(1) x^5 + v_3(1)x^6 \\
&& + v_1(2)x^7 + v_2(2) x^8 + v_3(2)x^9 \\
&=& a_4x + a_5 x^2 + a_6x^3 \\
&&+ a_7x^4 + a_8(1) x^5 + a_9(1)x^6 \\
&& + a_{10}(2)x^7 + a_{11}(2) x^8 + a_{12}(2)x^9 .
\end{eqnarray*}
After multiplying through by the factor of $x^3$, we get the next nine terms in the series.

For the next $27$ terms, we start with
\begin{eqnarray*}
P(x^9)P(x^3) &=& (A_0 + A_1x^9 + A_2x^{18})(A_0 + A_1x^3 + A_2x^{6}) \\
&=& A_0A_0 + A_0A_1 x^3+ A_0A_2x^6 \\
&&+ A_1A_0x^9 + A_1A_1x^{12}+A_1A_2x^{15} \\
&&+  A_2A_0x^{18} + A_2A_1x^{21}+A_2A_2x^{24}.\\
\end{eqnarray*}
Then
\begin{eqnarray*}
e_3^T  \cdot VP(x^9)P(x^3)  &=& \triangle(00) +  \triangle(01) x^3+  \triangle(02)x^6 \\
&&+  \triangle(10)x^9 +  \triangle(11)x^{12}+ \triangle(12)x^{15}\\
&& + \triangle(20)x^{18} +  \triangle(21)x^{21}+ \triangle(22)x^{24}
\end{eqnarray*}
and hence 
\begin{eqnarray*}
e_3^T  \cdot VP(x^9)P(x^3)  \left( xe_1+ x^2e_2+ x^3  e_3 \right)  &=&v_1(00) x + v_2(00) x^2 + v_3(00) x^3 \\
&&+ v_1(01) x^4 + v_2(01) x^5 + v_3(01) x^6   \\
&&+ v_1(02) x^7 + v_2(02) x^8 + v_3(02) x^9  \\
&&+ v_1(10) x^{10} + v_2(10) x^{11} + v_3(10) x^{12}  \\
&&+ v_1(11) x^{13} + v_2(11) x^{14} + v_3(11) x^{15}  \\
&&+ v_1(12) x^{16} + v_2(12) x^{17} + v_3(12) x^{18}  \\
&&+ v_1(20) x^{19} + v_2(20) x^{20} + v_3(20) x^{21}  \\
&&+ v_1(21) x^{22} + v_2(21) x^{23} + v_3(21) x^{24} \\
&& + v_1(22) x^{25} + v_2(22) x^{26} + v_3(22) x^{27} 
\end{eqnarray*}
which is 
$$a_{13}x + a_{14} x^2 + a_{15} x^3 
+\cdots +  
 a_{37} x^{25} + a_{38} x^{26} + a_{39}x^{27} $$
Using that for $k=2$ 
$$\frac{3( 3^{k}-1)}{2} = 12,$$
we multiply the above sum by the factor of $x^{12}$, which does indeed give us the next $27$ terms.

The next $81$ terms are similarly computed.  To see how this would work, note that $P(x^{27})P(x^9)P(x^3)$ equals 
$$(A_0 + A_1x^{27} + A_2x^{54})(A_0 + A_1x^9 + A_2x^{18})(A_0 + A_1x^3 + A_2x^6),$$
which gives us
\begin{eqnarray*}
P(x^{27})P(x^9)P(x^3) 
&=& A_0A_0A_0 + A_0A_0A_1 x^3+ A_0A_0A_2x^6 \\
&&+ A_0A_1A_0x^9 + A_0A_1A_1 x^{12}+ A_0A_2A_2x^{15}  \\
&&+ A_0A_2A_0x^{18} + A_0A_2A_1 x^{21}+ A_0A_2A_2x^{24}  \\
&&+ A_1A_0A_0x^{27} + A_1A_0A_1x^{30}+ A_1A_0A_2x^{33}  \\
&&+A_1A_1A_0x^{36} + A_1A_1A_1x^{39}+A_1A_1A_2x^{42}\\
&&+A_1A_{2} A_0  x^{45} + A_1A_2 A_1x^{48}+ A_1A_2A_2x^{51}  \\
&&  + A_2A_0A_0x^{54} +A_2A_0A_1x^{57} +A_2A_0A_2x^{60} \\
&& + A_2A_1A_0x^{63}  +A_2A_1A_1x^{66}  +A_2A_1  A_2  x^{69}\\
&& + A_2A_2 A_0x^{72}+ A_2A_2A_1x^{75} + + A_2A_2A_2x^{78}
\end{eqnarray*}
Then $e_3^T\cdot VP(x^{27})P(x^9)P(x^3)$ will be a sum of row vectors of the various $\triangle(i_1,i_2,i_3)$, indexed by appropriate powers of $x$.  Then 
 $$e_3^T\cdot VP(x^{27})P(x^9)P(x^3) \left( xe_1+ x^2e_2+ x^3  e_3 \right), $$
 multiplied by $x^{39}$ (since for $k=3$ we have 
 $3( 3^{k}-1)/2= 39),$
 will give us the next $81$ terms
 $$a_{40} x^{40}  + \cdots+a_{118}x^{118}.$$
 The rest of the series is similar.

\end{proof}

  \section{Stern's Triatomic Sequence and Triangles}\label{sec:Stern's Triatomic Sequence and Triangles}
  In this section we will show that   Stern's triatomic sequence can be interpreted as a subdivision of a triangle, in analog to how Stern's diatomic sequence can be interpreted as systematically subdividing the unit interval.

Start with a  triangle $\triangle$ 
\begin{center}
  
\begin{tikzpicture}[scale=.4]

\draw(2.5,4)--(0,0);
\draw(2.5,4)--(5,0);

  \draw (0,0)--(5,0);

  \node[below left] at (0,0){$v_1$};
   \node[below right] at (5,0){$v_2$};
    \node[above] at (2.5,4){$v_3$};

 \end{tikzpicture}
 
\end{center}

\noindent with vertices $v_1,v_2,v_3$.  In this paper we will consider both the case when the vertices denote numbers and when the vertices denote vectors.  From $\triangle$ form three new triangles 

 \begin{center}
  
\begin{tikzpicture}[scale=.7]
\draw[ ultra thick](2.5,2)--(2.5,4);
\draw(2.5,4)--(0,0);
\draw(2.5,4)--(5,0);

  \draw (0,0)--(5,0);
  \draw[ ultra thick](2.5,2)--(0,0);
  \draw[ ultra thick](2.5,2)--(5,0);
  
  \draw[->, dashed]( 5,2)--(2.6,2);
  
  \node[below left] at (0,0){$v_1$};
   \node[below right] at (5,0){$v_2$};
    \node[above] at (2.5,4){$v_3$};
    
    \node[right] at (5, 2){$v_1+v_2+v_3$};
 \end{tikzpicture}
 
\end{center}

\noindent where $\triangle(0)$ has vertices $v_1, v_2, v_1+v_2+v_3$,  $\triangle(1)$ has vertices $v_2, v_3, v_1+v_2+v_3$ and  $\triangle(2)$ has vertices $v_3, v_1, v_1+v_2+v_3.$
Each of these triangles can be divided into three more triangles, giving us nine triangles 
\begin{center}
  
\begin{tikzpicture}
\draw(2.5,4)--(0,0);
\draw(2.5,4)--(5,0);
  \draw (0,0)--(5,0);
  
  \draw(2.5,1.8)--(2.5,4);
 \draw(2.5,1.8)--(0,0);
  \draw(2.5,1.8)--(5,0);
  
  \draw[ ultra thick](2.5, .9)--(2.5,1.8);
   \draw[ ultra thick]   (2.5, .9)--(0,0);
    \draw[ ultra thick](2.5, .9)--(5,0);

    \draw[ ultra thick](1.8, 1.9)--(2.5, 1.8);
    \draw[ ultra thick](1.8, 1.9)--(0,0);
    \draw[ ultra thick](1.8, 1.9)--(2.5, 4);
    
    \draw[ ultra thick](3.2, 1.9)--(5,0);
     \draw[ ultra thick](3.2, 1.9)--(2.5,4);
      \draw[ ultra thick](3.2, 1.9)--(2.5,1.8);
      
       \draw[->, dashed]( 5,1.6)--(2.7,1.75);
       
        \draw[->, dashed]( 4.2,3.1)--(3.3, 2);
        
          \draw[->, dashed](.8, 3.1)--(1.7, 2);
           \draw[->, dashed](2.5, -.5)--(2.5, .8);
  
  \node[below left] at (0,0){$v_1$};
   \node[below right] at (5,0){$v_2$};
    \node[above] at (2.5,4){$v_3$};
    
    \node[right] at (5, 1.6){$v_1+v_2+v_3$};
    
    \node[above right] at (4.2, 3.1){$v_1+2v_2+2v_3$};
    
     \node[above left] at (.8, 3.1){$2v_1+v_2+2v_3$};
      \node[below] at (2.5, -.5){$2v_1+v_2+2v_3$};

 \end{tikzpicture}
 
\end{center}

We will identify each of these triangles with their vertices.  Thus we start with our initial triangle (which we will say is at level $0$):
$$\triangle =(v_1, v_2, v_3).$$
At level one we have
\begin{eqnarray*}
\triangle(0)   &   =  &  (v_1, v_2,   v_1+v_2+ v_3)  \\
\triangle(1)   &   =  &  (v_2, v_3,   v_1+v_2+ v_3)  \\
\triangle(2)   &   =  &  (v_3, v_1,   v_1+v_2+ v_3)
\end{eqnarray*}

 \begin{center}
  
\begin{tikzpicture}
\draw(2.5,1.8)--(2.5,4);
\draw(2.5,4)--(0,0);
\draw(2.5,4)--(5,0);
  \draw (0,0)--(5,0);
  \draw(2.5,1.8)--(0,0);
  \draw(2.5,1.8)--(5,0);

  \node at (2.5, .9){$\triangle (0)$};
  \node at (3.2,1 .9){$\triangle (1)$};
   \node at (1.8,1 .9){$\triangle (2)$};

  \node[below left] at (0,0){$v_1$};
   \node[below right] at (5,0){$v_2$};
    \node[above] at (2.5,4){$v_3$};
  
 \end{tikzpicture}
 
\end{center}

while for level two we have
\begin{eqnarray*}
\triangle(00)   &   =  &  (v_1, v_2,   2v_1+ 2 v_2+ v_3)  \\
\triangle(01)   &   =  &  (v_2, v_1 + v_2 + v_3,   2v_1+ 2 v_2+ v_3)  \\
\triangle(02)   &   =  &  (v_1+v_2+v_3, v_1,   2v_1+ 2 v_2+ v_3)  \\
\triangle(10)   &   =  &  (v_2, v_3,   v_1+ 2 v_2+ 2v_3) \\
\triangle(11)   &   =  &  (v_3, v_1+v_2+ v_3,   v_1+ 2 v_2+ 2v_3) \\
\triangle(12)   &   =  &  (v_1+v_2+ v_3, v_2,   v_1+ 2 v_2+ 2v_3)  \\
\triangle(20)   &   =  &  (v_3, v_1,   2v_1+  v_2+ 2v_3)  \\
\triangle(21)   &   =  &  (v_1, v_1+v_2+ v_3,   2v_1+  v_2+ 2v_3) \\
\triangle(21)   &   =  &  ( v_1+v_2+ v_3,  v_3,  2v_1+  v_2+ 2v_3).
\end{eqnarray*}

\begin{center}
  \begin{tikzpicture}[scale=1.4]

\draw(2.5,4)--(0,0);
\draw(2.5,4)--(5,0);
  \draw (0,0)--(5,0);
  
  \draw(2.5,1.8)--(2.5,4);
 \draw(2.5,1.8)--(0,0);
  \draw(2.5,1.8)--(5,0);
  
  \draw(2.5, .9)--(2.5,1.8);
   \draw(2.5, .9)--(0,0);
    \draw(2.5, .9)--(5,0);

    \draw(1.8, 1.9)--(2.5, 1.8);
    \draw(1.8, 1.9)--(0,0);
    \draw(1.8, 1.9)--(2.5, 4);
    
    \draw(3.2, 1.9)--(5,0);
     \draw(3.2, 1.9)--(2.5,4);
      \draw(3.2, 1.9)--(2.5,1.8);

       \node at (2.5,.4){$\triangle(00)$};
        \node at (3.1,1){$\triangle(01)$};
 \node at (1.9,1){$\triangle(02)$};

  \node[right] at (5,1.2){$\triangle(12)$};
    \draw[->, dashed]( 5,1.2)--(3.1,1.6);
    
  \node[left] at (0,1.2){$\triangle(21)$};
   \draw[->, dashed]( 0,1.2)--(1.9,1.6);

   \node[right] at (4.5,2.1){$\triangle(10)$};
    \draw[->, dashed]( 4.5,2.1)--(3.5, 2);

    \node[left] at (0.5,2.1){$\triangle(20)$};
    \draw[->, dashed]( 0.5,2.1)--(1.5, 2);
    
    \node[right] at (3.8, 3){$\triangle(11)$};
      \draw[->, dashed]( 3.8,3)--(2.8, 2.4);

      \node[left] at (1.2,3){$\triangle(22)$};
        \draw[->, dashed]( 1.2,3)--(2.2, 2.4);

  \node[below left] at (0,0){$v_1$};
   \node[below right] at (5,0){$v_2$};
    \node[above] at (2.5,4){$v_3$};
 \end{tikzpicture}
 
\end{center}

Continuing this process produces a sequence of triples 
$$\triangle, \triangle(0),\triangle(1), \triangle(2),\triangle(00),\triangle(01),\triangle(02), \triangle(10), \ldots, $$
which, in a natural way, will give us last section's Stern's triatomic sequence.

\section{The Farey Map}\label{sec:The Farey Map}
\subsection{Definition}

Stern's diatomic sequence is linked to continued fractions \cite{Northshield1}.  (This can also be seen in how the diatomic sequence can be interpreted via the Farey decomposition of the unit interval.)  There is a multidimensional continued fraction algorithm which generates in an analogous fashion   Stern's triatomic sequence.
Beaver and the author \cite{Beaver-Garrity1} gave a (seemingly) new multi-dimensional continued fraction algorithm, with  the goal of finding finding a generalization of the Minkowski ?(x) function.  This  algorithm, though,   seems particularly well-suited to generalize Stern's diatomic sequence.  (Panti \cite{Panti1}used the Monkmeyer algorithm to generalize the Minkowski ?(x) function.)  Also, it is standard in the study of multidimensional continued fractions to pass back and forth between  cones in $\R^3$  and  triangle in $\R^2$, via the map, when $x\neq 0$,
$(x,y,z) \rightarrow (y/x, z/x).$  In this subsection we define the Farey map as an iteration of a triangle, while in the next subsection we will think of the Farey map as an iteration of the corresponding cone.

Let $$\triangle = \{(x,y) \in \R^2: 0\leq y \leq x \leq 1\}.$$ 
Partition $\triangle $ into  the three subtriangles
\begin{eqnarray*}
\triangle_0   & = & \{(x,y) \in \triangle :  1-2y \geq x-y \geq y \}  \\
\triangle_1   & = & \{(x,y) \in \triangle :  2x-1 \geq y \geq 1-x \}  \\
\triangle_2   & = & \{(x,y) \in \triangle :  1-2x+2y \geq 1-x \geq x-y \}.
\end{eqnarray*}
The Farey map $T:\triangle \rightarrow \triangle$ is given by three  one-to-one onto maps 
$$T_i:\triangle_i \rightarrow \triangle$$ 
defined via
\begin{eqnarray*}
T_0(x,y)   & = &  \left( \frac{x-y}{1-2y}, \frac{y}{1-2y} \right) \\
T_1(x,y) &=& \left( \frac{y}{2x-1}, \frac{1-x}{2x-1} \right) \\
T_2(x,y) &=&  \left( \frac{1-x}{1-2x+2y}, \frac{x-y}{1-2x+2y} \right)
\end{eqnarray*}
These maps are the Farey analog of the Gauss map for continued fractions.  

\subsection{Farey Map on Vertices}

Now we will identify  triangles with their corresponding cones in $\R^3$.  Let $\triangle$ be a cone  with three vertices $v_1, v_2, v_3$, each written as a column vector in $\R^3$.  Subdivide $\triangle$ into three sub-cones

$$\begin{array}{ccc}
\triangle(0) & \mbox{with vertices}&  v_1, v_2,v_1+v_2+ v_3 \\
\triangle(1) & \mbox{with vertices}& v_2, v_3,v_1+v_2+ v_3  \\
 \triangle(3) & \mbox{with vertices} &v_3, v_1, v_1+v_2+ v_3
 \end{array}$$

We have three maps
$$A_i:\triangle \rightarrow \triangle_i,$$
which we describe via matrix multiplication on the right.  

Recall that  
$$A_0=  \left(  \begin{array}{ccc}    1  & 0 & 1\\ 0 & 1 & 1 \\ 0 & 0 & 1\end{array}  
\right),   A_1 = \left(  \begin{array}{ccc}    0  & 0 & 1\\ 1 & 0 & 1 \\ 0 & 1 & 1\end{array}  
\right), A_ 2 = \left(  \begin{array}{ccc}    0  & 1 & 1\\ 0 & 0 & 1 \\ 1 & 0 & 1\end{array}  
\right).$$
For any three vectors $v_1$ $v_2$ and $v_3$, we have
$$(v_1,v_2.v_3)A_0= (v_1, v_2, v_1 + v_2 + v_3)$$
$$(v_1,v_2.v_3)A_1= (v_2, v_3, v_1 + v_2 + v_3)$$
$$(v_1,v_2.v_3)A_2= (v_3, v_1, v_1 + v_2 + v_3).$$

We now specify our initial three vectors, by setting, as before,

$$V= (v_1,v_2,v_3)= \left(\begin{array}{ccc} 0 & 1 &1  \\ 0 & 0 & 1 \\ 1 & 1 & 1 \end{array} \right).$$
Then $VA_0, VA_1$ and $VA_2$ 
is simply the Farey map's analog of the Farey subdivision of the unit interval  and can be interpreted as the matrix approach for the inverse maps of the three Farey maps $T_i:\triangle_i\rightarrow \triangle$ of triangles in $\R^2$  from the previous subsection.

Then we have 
\begin{theorem} For an $n$-tuple $I=(i_1,\ldots, i_n)$ where each $i_j$ is a zero, one or two, we have that 
$$VA_{i_1}\ldots A_{i_n} = \left(  \begin{array}{ccc} * & * & * \\ * & * & * \\ v_1(I)&v_ 2(I) & v_3(I) \end{array}  \right).$$
\end{theorem}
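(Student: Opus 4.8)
The plan is to reduce the statement to the scalar recursion already set up in Section~\ref{sec:Stern's Triatomic sequence}, using the single observation that right multiplication by any of the matrices $A_0, A_1, A_2$ acts on the rows of a matrix independently. Concretely, for any $3\times 3$ matrix $B$ and any $j\in\{0,1,2\}$, the bottom row of $BA_j$ equals the bottom row of $B$ multiplied on the right by $A_j$; equivalently $e_3^T(BA_j) = (e_3^T B)A_j$, which is just associativity of matrix multiplication together with the fact that multiplying on the left by $e_3^T$ extracts the bottom row. So the bottom row of $VA_{i_1}\cdots A_{i_n}$ is completely determined by the bottom row of $V$, and the top two rows (the asterisks) play no role.

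First I would record that the bottom row of the chosen initial matrix $V$ is exactly $\triangle = (1,1,1)$, i.e. $e_3^T V = (v_1,v_2,v_3)$ in the scalar sense of Section~\ref{sec:Stern's Triatomic sequence}. Then I would argue by induction on $n$. The base case $n=0$ is the previous sentence. For the inductive step, write $I=(i_1,\ldots,i_n)$ and $I'=(i_1,\ldots,i_{n-1})$, assume $e_3^T(VA_{i_1}\cdots A_{i_{n-1}}) = (v_1(I'),v_2(I'),v_3(I'))$, and compute
\[
e_3^T\bigl(VA_{i_1}\cdots A_{i_n}\bigr) = \bigl(e_3^T VA_{i_1}\cdots A_{i_{n-1}}\bigr)A_{i_n} = (v_1(I'),v_2(I'),v_3(I'))A_{i_n}.
\]
By the defining relation $\triangle(I',i_n) = \triangle(I')A_{i_n}$ from Section~\ref{sec:Stern's Triatomic sequence}, the right-hand side equals $(v_1(I),v_2(I),v_3(I))$, which is the claimed bottom row, completing the induction.

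Alternatively — and this is really the same argument with the induction suppressed — one observes in one line that $e_3^T(VA_{i_1}\cdots A_{i_n}) = (e_3^T V)A_{i_1}\cdots A_{i_n} = (1,1,1)A_{i_1}\cdots A_{i_n} = \triangle A_{i_1}\cdots A_{i_n} = \triangle(I)$, invoking the displayed identity $\triangle(I) = \triangle A_{i_1}\cdots A_{i_n}$ established earlier. Either way there is no genuine obstacle: the content lies entirely in noticing that the bottom row of $V$ was rigged to be $(1,1,1)$ and that the $A_i$ transform rows independently, so the scalar triatomic recursion is literally the bottom-row shadow of the vector recursion. The one point I would be careful to flag is the deliberate overloading of the symbols $v_1,v_2,v_3$ (column vectors forming $V$ versus the scalars $1,1,1$ generating the triatomic sequence); I would add a sentence making explicit that $v_i(I)$ in the statement denotes the scalar values, namely the entries of the row vector $e_3^T V A_{i_1}\cdots A_{i_n}$.
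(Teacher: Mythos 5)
Your proof is correct and is essentially the argument the paper intends: the paper states this theorem without a separate proof, but the key identity $(v_1(I),v_2(I),v_3(I)) = (1,1,1)A_{i_1}\cdots A_{i_n} = e_3^T\cdot V A_{i_1}\cdots A_{i_n}$ is exactly the one recorded at the start of the proof of the generating-function theorem in Section 2, and your induction (or the one-line version) fills in the routine verification. Your remark about the overloading of $v_1,v_2,v_3$ as vectors versus scalars is a worthwhile clarification.
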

Thus  Stern's triatomic sequence is  the analog to  the fact that Stern's diatomic sequence can be interpreted as  keeping track of the denominators for the Farey decomposition of the unit interval.

\section{Combinatorial Properties of Tri-Atomic Sequences}\label{sec:Combinatorial Properties of Tri-Atomic Sequences}

We now develop  some  analogs of  the combinatorial properties for Stern diatomic sequences for triatomic sequences.

\subsection{Three-fold symmetry}

Property 4 in Lehmer \cite{Lehmer1} is that there is a two fold symmetry for Stern's diatomic sequences on each ``level''.  We will see that for triatomic sequences, we have a three-fold symmetry at each level.

\begin{prop}  For any 
$N=\tau(0, j_{2}, \ldots , j_{n};k),$  where each   $j_i=0, 1$ or $2$ and $k$ is as large as possible for $k\in \{1,2,3\},$ we have
$$a_N = a_{N+ 3^k} = a_{N+ 2\cdot 3^N}.$$
\end{prop}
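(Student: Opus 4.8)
The plan is to derive all three equalities from one structural fact: for the Stern triatomic sequence (the case $a_1=a_2=a_3=1$), the triple $\triangle(I)$ is unchanged when the first entry of the index $I$ is replaced by any other element of $\{0,1,2\}$.

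The only computation actually needed is
$$\triangle A_0 = \triangle A_1 = \triangle A_2 = (1,1,3),$$
which is immediate from the explicit matrices: in each $A_i$ the first two columns are standard basis column vectors and the third column is $(1,1,1)^T$, so all three matrices share the column-sum vector $(1,1,3)$, and left-multiplying $(1,1,1)$ by a $3\times 3$ matrix returns exactly its vector of column sums. I would stress that this is precisely where the symmetric initial triple $(1,1,1)$ is used: for a general starting triple the three products $\triangle A_0,\triangle A_1,\triangle A_2$ are the pairwise distinct triples $\triangle(0),\triangle(1),\triangle(2)$, so no three-fold symmetry holds in general.

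Next, from the matrix description $\triangle(I)=\triangle A_{i_1}\cdots A_{i_n}$ established earlier, together with associativity, for any tuple $J=(j_2,\ldots,j_n)$ and any $i\in\{0,1,2\}$ one gets
$$\triangle(i,j_2,\ldots,j_n)=(\triangle A_i)A_{j_2}\cdots A_{j_n}=(1,1,3)\,A_{j_2}\cdots A_{j_n},$$
which does not depend on $i$; hence $v_1(I),v_2(I),v_3(I)$ depend only on the entries of $I$ after the first. By the proposition asserting $a_{\tau(I;k)}=v_I(k)$, the value $a_{\tau(i,j_2,\ldots,j_n;k)}$ is therefore independent of $i$ as well.

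Finally I would read off the index shift from the definition of $\tau$: the leading entry of a length-$n$ index carries weight $3^n$, so the indices with leading digit $0$, $1$, $2$ are $N$, $N+3^n$, $N+2\cdot 3^n$, which gives the asserted chain of equalities (I read the exponents in the displayed statement as $3^n$, with $n$ the level of $N$). There is essentially no obstacle here; the only care needed is in tracking the place value of the leading digit inside $\tau$, and in using the hypothesis that the leading digit of $N$ equals $0$, which is exactly what keeps $N+3^n$ and $N+2\cdot 3^n$ inside level $n$.
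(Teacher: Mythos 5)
Your proof is correct and follows essentially the same route as the paper: the paper computes the level-$1$ terms $1,1,3,1,1,3,1,1,3$ and then says the rest follows from the recursion, which is exactly your observation that $\triangle A_0=\triangle A_1=\triangle A_2=(1,1,3)$ propagated forward — you simply make the propagation step explicit via associativity of the matrix product $\triangle(i,j_2,\ldots,j_n)=(\triangle A_i)A_{j_2}\cdots A_{j_n}$. Your reading of the exponents in the statement as $3^n$ (the place value of the leading digit in $\tau$) is the intended one; the displayed consequence $a_{\tau(0,J;k)}=a_{\tau(1,J;k)}=a_{\tau(2,J;k)}$ confirms it.
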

Thus 
$$a_{\tau(0, j_{2}, \ldots , j_{n};k)} = a_{\tau(1, j_{2}, \ldots , j_{n};k)} = a_{\tau(2, j_{2}, \ldots , j_{n},;k)} .$$

\begin{proof}
At the $0$th level, we have
$$a_1= 1, \;a_2=1, \; a_3=1.$$
The $1$th level is
$$a_4, \ldots , a_{12},$$
which by   definition is
$$a_1, a_2, a_1 + a_2 + a_3, a_2, a_3,  a_1 + a_2 + a_3, a_3, a_1,  a_1 + a_2 + a_3,$$
which in turn is simply
$$1,1,3,1,1,3,1,1,3,$$
satisfying the desired relation.

The rest follows from the basic recursion relation.

\end{proof}

\subsection{Tribonacci numbers}
Recall that the tribonacci numbers are the terms in the sequence $\beta_1, \beta_2, \ldots$ given by the recursion relation
$$\beta_{k+3} = \beta_k + \beta_{k+1} + \beta_{k+2}.$$
Such sequences are intimately intwined with triatomic sequences.

 \begin{theorem} For any $n$-tuple $J$, the following subsequence is a tribonacci   sequence:
 $$a_{\tau(J:1)}, a_{\tau(J:2)}, a_{\tau(J:3)}, a_{\tau(J, 1:3)}, a_{\tau(J, 1,1:3)}, a_{\tau(J, 1,1,1:3)}, \ldots$$
 \end{theorem}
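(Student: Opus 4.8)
The plan is to reduce the claim to the recursion defining the triatomic sequence, using the fact (from the Proposition relating $a_N$ and $v_I(k)$) that $a_{\tau(J:k)} = v_J(k)$, where $v_J = (v_1(J), v_2(J), v_3(J))$ is the triple $\triangle(J) = \triangle A_{j_1}\cdots A_{j_m}$ for $J = (j_1,\dots,j_m)$. So the assertion is equivalent to showing that the sequence
$$v_1(J),\; v_2(J),\; v_3(J),\; v_3(J,1),\; v_3(J,1,1),\; v_3(J,1,1,1),\;\dots$$
satisfies the tribonacci recursion $\beta_{k+3} = \beta_k + \beta_{k+1} + \beta_{k+2}$.

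The key computation is to understand what appending a $1$ does to a triple. From the defining relations for $\triangle(I,1)$ we have
$$v_1(I,1) = v_2(I),\quad v_2(I,1) = v_3(I),\quad v_3(I,1) = v_1(I) + v_2(I) + v_3(I).$$
In other words, passing from $\triangle(I)$ to $\triangle(I,1)$ is exactly one step of the tribonacci-type shift: the new first two coordinates are the old second and third, and the new third coordinate is the sum of all three old coordinates. Writing $w^{(0)} = (v_1(J), v_2(J), v_3(J))$ and $w^{(r)} = (w^{(r)}_1, w^{(r)}_2, w^{(r)}_3)$ for the triple $\triangle(J, \underbrace{1,\dots,1}_{r})$, an easy induction on $r$ using the three displayed identities gives $w^{(r)}_1 = w^{(r-1)}_2$, $w^{(r)}_2 = w^{(r-1)}_3$, and $w^{(r)}_3 = w^{(r-1)}_1 + w^{(r-1)}_2 + w^{(r-1)}_3$. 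Consequently, if we set $\beta_1 = v_1(J)$, $\beta_2 = v_2(J)$, $\beta_3 = v_3(J)$, and $\beta_{3+r} = w^{(r)}_3$ for $r \ge 1$, then for each $r \ge 1$ we have $\beta_{3+r} = w^{(r)}_3 = w^{(r-1)}_1 + w^{(r-1)}_2 + w^{(r-1)}_3$, and identifying $w^{(r-1)}_1 = \beta_r$, $w^{(r-1)}_2 = \beta_{r+1}$, $w^{(r-1)}_3 = \beta_{r+2}$ (which is precisely what the induction establishes) yields $\beta_{r+3} = \beta_r + \beta_{r+1} + \beta_{r+2}$, the tribonacci recursion.

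Finally I would translate this back: by the Proposition, $a_{\tau(J:1)} = v_1(J) = \beta_1$, $a_{\tau(J:2)} = v_2(J) = \beta_2$, $a_{\tau(J:3)} = v_3(J) = \beta_3$, and $a_{\tau(J,1,\dots,1:3)}$ with $r$ ones equals $v_3(J,1,\dots,1) = w^{(r)}_3 = \beta_{3+r}$, so the listed subsequence is exactly $\beta_1, \beta_2, \beta_3, \beta_4, \dots$, a tribonacci sequence. The only mild subtlety — hardly an obstacle — is bookkeeping: making sure the index $k$ in $\tau(J,1,\dots,1:k)$ is really forced to be $3$ (so that the recursive formula for $a_N$ selects the "sum of all three" branch $a_{\tau_1(N)} + a_{\tau_2(N)} + a_{\tau_3(N)}$ with $j_1 = 1$), and checking that $\tau_1, \tau_2, \tau_3$ applied to $N = \tau(J,1,\dots,1:3)$ return the three coordinate-indices of the parent triple $\triangle(J,1,\dots,1)$ shifted down one level. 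Once that is verified, the tribonacci recursion for $a_N$ follows directly from the case $j_1 = 1$, $l = 3$ of the defining recursion, and no appeal to the triple picture is strictly necessary — but the triple picture is the cleanest way to see it.
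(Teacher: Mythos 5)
Your proposal is correct and follows essentially the same route as the paper: both arguments rest on the single observation that appending the index $1$ sends the triple $(v_1(I),v_2(I),v_3(I))$ to $(v_2(I),\,v_3(I),\,v_1(I)+v_2(I)+v_3(I))$, i.e.\ performs one tribonacci shift, and then iterate. The paper writes this directly in terms of the $a_{\tau(\cdot:k)}$ notation while you work with the $\triangle(I)$ triples and translate at the end via the proposition $a_{\tau(I;k)}=v_k(I)$, but the content is identical (and your explicit induction on $w^{(r)}$ is, if anything, slightly more careful than the paper's ``this process continues'').
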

 
 \begin{proof}
 Start with the triple
 $$a_{\tau(J:1)}, a_{\tau(J:2)}, a_{\tau(J:3)}.$$
 Then the triple
 $a_{\tau(J, 1:1)}, a_{\tau(J,1:2)}, a_{\tau(J,1:3)}$
 is
 $$a_{\tau(J:2)}, a_{\tau(J:3)},  a_{\tau(J:1)}+   a_{\tau(J:2)}+  a_{\tau(J:3)}.$$
 
Next, the triple  $a_{\tau(J, 1,1:1)}, a_{\tau(J,1,1:2)}, a_{\tau(J,1,1:3)}$ is $$ a_{\tau(J:3)},a_\tau{(J,1:3)}, a_{\tau(J:2)}+  a_{(J:3)} + a_{(J,1:3)}.$$
 This process continues, since by definition we have 
 $$a_{(J,1^s:3)} = a_{(J,1^{s-3}:3)} + a_{(J,1^{s-2}:3)}  + a_{(J,1^{s-1}:3)},$$
 where $1^s$ is notation for $s$ ones in a row,
 giving us our result.

 \end{proof}
 
 \begin{corr} The standard tribonacci   sequence $$1,1,1,3,5,9,17,31,57,105, 193,\ldots $$
 is $$a_1, a_2, a_3, a_{\tau(1:3)}, a_{\tau(1,1:3)}, \ldots , a_{\tau(1^s:3)}, \ldots $$
 
 \end{corr}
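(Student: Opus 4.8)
The plan is to obtain the corollary as the special case $J = \emptyset$ of the preceding theorem, together with a check of the initial conditions. First I would invoke the theorem with $J$ taken to be the empty tuple, so that the subsequence
$$a_{\tau(\emptyset:1)}, a_{\tau(\emptyset:2)}, a_{\tau(\emptyset:3)}, a_{\tau(1:3)}, a_{\tau(1,1:3)}, a_{\tau(1,1,1:3)}, \ldots$$
is a tribonacci sequence. Here one uses the extension of $\tau$ to the empty tuple, namely $\tau(\emptyset;1)=1$, $\tau(\emptyset;2)=2$, $\tau(\emptyset;3)=3$, and one observes that the inductive step in the proof of the theorem --- the identity $a_{(J,1^s:3)} = a_{(J,1^{s-3}:3)} + a_{(J,1^{s-2}:3)} + a_{(J,1^{s-1}:3)}$ --- still holds when $J$ is empty, since that identity is a direct instance of the recursive definition of $a_N$ and never uses the nonemptiness of $J$.

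Next I would pin down the initial three terms. Since the triatomic sequence is built from the initial triple $\triangle = (a_1,a_2,a_3) = (1,1,1)$, we have $a_{\tau(\emptyset:1)} = a_1 = 1$, $a_{\tau(\emptyset:2)} = a_2 = 1$, and $a_{\tau(\emptyset:3)} = a_3 = 1$. Thus the tribonacci sequence produced above has initial values $1,1,1$, which is exactly the normalization defining the standard tribonacci sequence: $\beta_1=\beta_2=\beta_3=1$ together with $\beta_{k+3}=\beta_k+\beta_{k+1}+\beta_{k+2}$.

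Finally, I would record the indexing dictionary $\beta_1 \leftrightarrow a_1$, $\beta_2 \leftrightarrow a_2$, $\beta_3 \leftrightarrow a_3$, and $\beta_{3+s} \leftrightarrow a_{\tau(1^s:3)}$ for $s \geq 1$; under this correspondence the relation $\beta_{k+3}=\beta_k+\beta_{k+1}+\beta_{k+2}$ matches term by term the tribonacci relation established in the theorem, and a short computation confirms $\beta_4 = 3$, $\beta_5 = 5$, $\beta_6 = 9$, $\beta_7 = 17$, in agreement with the displayed list. There is essentially no obstacle: the only point that warrants a moment's care is the legitimacy of setting $J=\emptyset$ in the theorem, i.e.\ that nothing in its proof degenerates for the empty tuple, and this is immediate once $\tau$ has been extended to $\emptyset$ as above.
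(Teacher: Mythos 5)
Your proposal is correct and is exactly the intended argument: the paper treats the corollary as an immediate specialization of the preceding theorem to $J=\emptyset$, with the initial triple $(a_1,a_2,a_3)=(1,1,1)$ supplying the standard tribonacci seed. Your extra care in checking that the theorem's inductive step survives the empty tuple is a reasonable (if routine) addition, not a departure from the paper's route.
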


\subsection{Sums at Levels}

For Stern's diatomic sequence, Stern showed that the sum of all terms of level $n$ is $3^n+1$ (see property 2 in \cite{Lehmer1}).  A similar sum exists for  Stern's triatomic sequence.

\begin{prop}The sum of all terms in  Stern's triatomic sequence at the $n$th level is $5^n\cdot 3.$ Thus for fixed $n$
$$\sum a_{\tau(J:k)} = 5^n \cdot 3,$$
where the summation is over all $n$-tuples $J= (j_1, \ldots j_n)$, for  $j_i=0, 1$ or $2$, and over $k=1,2,3.$
\end{prop}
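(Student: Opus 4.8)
The plan is to track how the total sum of the three vertex-labels of a triangle transforms under one subdivision step, and then iterate. For a single triple $(a,b,c)$ with sum $s = a+b+c$, each of the three matrices $A_0, A_1, A_2$ produces a new triple which is a permutation of $\{a,b,c\}$ in its first two slots together with $s$ in the third slot; hence each child triple has sum $b+c+s$, $a+c+s$, $a+b+s$ respectively — no, more simply, each child triple has sum (two of the original entries) $+ s$, and summing the three children's sums gives $(a+b) + (b+c) + (c+a) + 3s = 2s + 3s = 5s$. So the key local fact is: the sum of the sums of the three child triples equals $5$ times the sum of the parent triple.

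From here the argument is a one-line induction on the level $n$. Let $S_n$ denote the sum of all entries of all triples at level $n$, i.e. $S_n = \sum_{|J|=n}\sum_{k=1}^{3} a_{\tau(J:k)} = \sum_{|J|=n}\bigl(v_1(J)+v_2(J)+v_3(J)\bigr)$. The base case is $S_0 = v_1+v_2+v_3 = 1+1+1 = 3$. For the inductive step, group the level-$(n+1)$ triples according to their level-$n$ parent: each triple $\triangle(J)$ at level $n$ gives rise to $\triangle(J,0), \triangle(J,1), \triangle(J,2)$, and by the local fact the contribution of these three children to $S_{n+1}$ is $5\bigl(v_1(J)+v_2(J)+v_3(J)\bigr)$. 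Summing over all level-$n$ tuples $J$ yields $S_{n+1} = 5 S_n$, hence $S_n = 5^n \cdot 3$ by induction.

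There is essentially no obstacle here: the only thing to verify carefully is the local identity, which is immediate from the explicit formulas for $v_i(I,j)$ already displayed in Section \ref{sec:Stern's Triatomic sequence} (each child has two of the parent's entries plus $v_1(I)+v_2(I)+v_3(I)$, so the three child-sums are $v_1+v_2+2(v_1+v_2+v_3)$-type expressions whose total is $2(v_1+v_2+v_3) + 3(v_1+v_2+v_3) = 5(v_1+v_2+v_3)$). One could alternatively phrase this via the matrices: the column-sum vector $(1,1,1)^T$ satisfies $A_j (1,1,1)^T$ summing appropriately, i.e. $(1,1,1)A_j$ has entry-sum equal to twice-plus-thrice considerations; but the bookkeeping is cleanest done directly on triples as above. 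The mild subtlety worth a sentence in the writeup is making sure the indexing $\tau(J:k)$ over $|J|=n$, $k\in\{1,2,3\}$ really does enumerate each level-$n$ entry exactly once, which follows from the definition of $\tau$ and the bijection between $n$-tuples $J$ and level-$n$ triples established by the Proposition relating $a_{\tau(I;k)}$ to $v_I(k)$.
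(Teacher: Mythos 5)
Your proposal is correct and follows essentially the same route as the paper: both identify the key local fact that the three child triples of a parent triple with sum $s$ have total sum $2s+3s=5s$, and then iterate from the base case $S_0=3$. Your writeup is if anything slightly more explicit about the induction and about why the $\tau(J:k)$ indexing enumerates each level-$n$ entry exactly once, but the substance is identical.
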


\begin{proof} The $0$th level is the three numbers $1,1,1$, and hence the sum is $3$.   Now suppose we have 
at the $n$ level the number 
$N=\tau(j_1, \ldots, j_n: 1).$
Then the triple of numbers 
$$a_N, a_{N+1}, a_{N+2}$$
is in the $n$th level.  What is important is that this triple generates the  following nine terms in the $(n+1)$st level:
$$a_N, a_{N+1},   a_N+  a_{N+1}  +  a_{N+2},$$
$$a_{N+1}, a_{N+2},   a_N+  a_{N+1}  +  a_{N+2}$$
and
$$a_{N+2}, a_{N},   a_N+  a_{N+1}  +  a_{N+2},$$
whose sum is 
$$5(a_N+  a_{N+1}  +  a_{N+2}),$$
giving us our result.

\end{proof}

\subsection{Values at Levels}

We can write each positive integer $N$ as $N=\tau(j_1, \ldots , j_n;k)$, where each $j_i\in \{0,1,2\}$ and each $k\in \{1,2,3\}$.  We have determined what the corresponding $a_N$ is at the $n$th level.  We set 
$$\delta_n(m)= \mbox{number of times  $m$ occurs in the  $n$th level}.$$
We further set
$$\delta_n^1(m)= \mbox{number of times  $m$ occurs in  the $n$th level when $k=1$},$$
$$\delta_n^2(m)= \mbox{number of times  $m$ occurs in  the $n$th level when $k=2$},$$
and
$$\delta_n^3(m)= \mbox{number of times  $m$ occurs in  the $n$th level when $k=3$}.$$
At level $1$, our sequence is 
$$1,1,3,1,1,3,1,1,3,$$
in which case
$$\delta_1(1) = 6, \delta_1(3) = 3 , \delta_1(n)=0\; \mbox{for} \;  n\neq1,3,$$
$$\delta_1^1(1) = 3, \delta_1^1(3) = 1 , \delta_1^1(n)=0\; \mbox{for} \;  n\neq1,3,$$
$$\delta_1^2(1) = 3, \delta_1^2(3) = 1 , \delta_1^2(n)=0\; \mbox{for} \;  n\neq1,3,$$
$$\delta_1^3(1) = 0, \delta_1^2(3) = 3 , \delta_1^2(n)=0\; \mbox{for} \;  n\neq1,3.$$

We now discuss some formulas for these new functions.

First for a technical lemma:
\begin{lemma}Suppose we are at the $n$th level, for some 
$N=\tau(j_1, \ldots , j_n;1).$  Then the triple $$a_N,a_{N+1},a_{N+2}$$ will induce the following three triples on the $(n+1)$st level:
The triple corresponding to 
$$a_{\tau(j_1, \ldots , j_n,0;1)}, a_{\tau(j_1, \ldots , j_n,0;2)}, a_{\tau(j_1, \ldots , j_n,0;3)}$$ will be
$$    (a_N, a_{N+1}, a_N+a_{N+1}+a_{N+2}),  $$
the triple corresponding to 
$$a_{\tau(j_1, \ldots , j_n,1;1)}, a_{\tau(j_1, \ldots , j_n,1;2)}, a_{(j_1, \ldots , j_n,1;3)}$$ will be
$$  (a_{N+1}, a_{N+2}, a_N+a_{N+1}+a_{N+2}),  $$
and the triple corresponding to 
$$a_{\tau(j_1, \ldots , j_n,2;1)}, a_{\tau(j_1, \ldots , j_n,2;2)}, a_{\tau(j_1, \ldots , j_n,2;3)}$$ will be
$$ (a_{N+2}, a_{N}, a_N+a_{N+1}+a_{N+2})  .$$
\end{lemma}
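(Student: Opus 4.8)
The plan is to translate the index arithmetic of $\tau$ into the matrix action of $A_0, A_1, A_2$ and then read off the claim from the already-established identity $\triangle(I) = (v_1(I), v_2(I), v_3(I))$. First I would use the Proposition stating $a_N = v_I(k)$ for $N = \tau(I;k)$ to pass from the sequence $a_N$ to the triples $\triangle(I)$. Concretely, if $N = \tau(j_1, \ldots, j_n;1)$ then the three values $a_N, a_{N+1}, a_{N+2}$ are exactly the ordered entries $v_1(I), v_2(I), v_3(I)$ of $\triangle(I)$, where $I = (j_1, \ldots, j_n)$; this is because incrementing $N$ by $1$ and $2$ corresponds exactly to incrementing the last slot $k$ of $\tau$ through $1, 2, 3$ (one checks from the formula $\tau(I;k) = \tfrac{3(3^n-1)}{2} + j_1 3^n + \cdots + j_n 3 + k$ that $\tau(I;k+1) = \tau(I;k)+1$ for $k=1,2$).

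Next I would identify the three induced triples on level $n+1$. The index $\tau(j_1, \ldots, j_n, 0; m)$ corresponds to the triple $\triangle(I,0)$, and likewise $\tau(j_1, \ldots, j_n, 1; m)$ to $\triangle(I,1)$ and $\tau(j_1, \ldots, j_n, 2; m)$ to $\triangle(I,2)$; again this is just the observation that appending a digit $i_{n+1}$ to the tuple and letting $m$ run over $\{1,2,3\}$ enumerates three consecutive integers, by the same $\tau$-arithmetic. Then I invoke the defining relations $\triangle(I,0) = \triangle(I)A_0$, $\triangle(I,1) = \triangle(I)A_1$, $\triangle(I,2) = \triangle(I)A_2$, i.e. the explicit formulas
\[
\triangle(I,0) = (v_1(I), v_2(I), v_1(I)+v_2(I)+v_3(I)),
\]
\[
\triangle(I,1) = (v_2(I), v_3(I), v_1(I)+v_2(I)+v_3(I)),
\]
\[
\triangle(I,2) = (v_3(I), v_1(I), v_1(I)+v_2(I)+v_3(I)),
\]
already recorded in Section~\ref{sec:Stern's Triatomic sequence}. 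Substituting $v_1(I) = a_N$, $v_2(I) = a_{N+1}$, $v_3(I) = a_{N+2}$ gives exactly the three claimed triples $(a_N, a_{N+1}, a_N+a_{N+1}+a_{N+2})$, $(a_{N+1}, a_{N+2}, a_N+a_{N+1}+a_{N+2})$, and $(a_{N+2}, a_N, a_N+a_{N+1}+a_{N+2})$.

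The main obstacle — really the only place where care is needed — is the bookkeeping that shows $\tau(j_1, \ldots, j_n, i; 1)$, $\tau(j_1, \ldots, j_n, i; 2)$, $\tau(j_1, \ldots, j_n, i; 3)$ are three consecutive integers for each fixed $i$, and that these nine integers, as $i$ ranges over $\{0,1,2\}$, are the correct nine consecutive-in-blocks indices occupying level $n+1$ above the block starting at $N$. This is a direct computation with the closed form for $\tau$ (the level-$n$ offset $\tfrac{3(3^n-1)}{2}$ jumps to $\tfrac{3(3^{n+1}-1)}{2}$, and the digit contributions scale by the appropriate power of $3$), so I expect it to be routine but slightly tedious; everything else follows formally from unraveling the definitions, exactly as the preceding Proposition and Theorem in this section were proved.
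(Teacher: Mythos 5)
Your proof is correct and is exactly the argument the paper has in mind: the paper states this as a "technical lemma" with no written proof, treating it as an immediate unraveling of the definitions, and your chain (the $\tau$-arithmetic showing $\tau(I;1),\tau(I;2),\tau(I;3)$ are consecutive, the proposition $a_{\tau(I;k)}=v_k(I)$, and the defining relations $\triangle(I,i)=\triangle(I)A_i$) is precisely that unraveling. No gaps.
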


We will say we have a triple $a_N,a_{N+1},a_{N+2}$ at level $n$ if 
$N=\tau(j_n, \ldots , j_1;1).$  
\begin{prop}  For all positive $n$ and $m$,
$$\delta_n(2m)=\delta_n^1(2m) = \delta_n^2(2m)=\delta_{n}^2(2m)=0.$$
\end{prop}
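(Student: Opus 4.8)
The plan is to prove the stronger fact that \emph{every} term of Stern's triatomic sequence is odd; the proposition follows at once, since if $2m$ never occurs at level $n$ then $\delta_n(2m)=0$, and because $\delta_n^1,\delta_n^2,\delta_n^3$ each merely count occurrences within a sub-collection of the level-$n$ terms, they all vanish too. So it suffices to show: for every tuple $I$, the three entries $v_1(I),v_2(I),v_3(I)$ are all odd.

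I would prove this by induction on the level $n=|I|$. The base case is $n=0$, where $\triangle=(1,1,1)$ has all entries odd. For the inductive step, suppose every triple $\triangle(I)$ with $|I|=n$ consists of odd numbers. By the Lemma just established (equivalently, by the explicit formulas for $v_j(I,i)$ in Section~\ref{sec:Stern's Triatomic sequence}), every triple at level $n+1$ has one of the forms $(a,b,a+b+c)$, $(b,c,a+b+c)$, $(c,a,a+b+c)$, where $(a,b,c)=\triangle(I)$ is a level-$n$ triple. By the inductive hypothesis $a,b,c$ are odd, so $a+b+c\equiv 1+1+1\equiv 1\pmod 2$ is again odd; hence each induced level-$(n+1)$ triple has all entries odd. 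Since every level-$(n+1)$ triple arises from some level-$n$ triple in this way, the induction closes, and no even integer $2m$ ever appears at any level.

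This argument is essentially routine — the sole arithmetic input is that a sum of three odd numbers is odd — so there is no genuine obstacle. The one point to be careful about is to invoke the Lemma (or the recursion of Section~\ref{sec:Stern's Triatomic sequence}) so that the enumeration of all level-$(n+1)$ triples is \emph{exhaustive}: a parity statement about the entire level $n$ propagates to the entire level $n+1$ only because every level-$(n+1)$ triple is generated by the three subdivision rules applied to a level-$n$ triple.
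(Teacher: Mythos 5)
Your proof is correct and follows essentially the same route as the paper: both argue by induction on the level, using the lemma that each level-$(n+1)$ triple is one of $(a,b,a+b+c)$, $(b,c,a+b+c)$, $(c,a,a+b+c)$ for a level-$n$ triple $(a,b,c)$, together with the base case $(1,1,1)$ and the observation that a sum of three odd numbers is odd. Your explicit remark that the enumeration of level-$(n+1)$ triples must be exhaustive is a worthwhile point of care that the paper leaves implicit.
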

\begin{proof}
If 
$N=\tau(j_1, \ldots , j_n;1)$ at the $n$th level and if each term in the triple $a_N,a_{N+1},a_{N+2}$ is odd, then all of their  their descendants in the next level will still be odd, from the above lemma. 
Thus to prove the proposition, we just have to observe that at the initial level $0$, our numbers are 
$1,1,1,$ forcing all subsequent elements to be odd.
\end{proof}

\begin{prop} For any odd number $2n+1>1$, we have 
$$\delta_n^3(2n+1) = 9.$$
\end{prop}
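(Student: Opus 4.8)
The plan is to first turn the statement into a question about the sums of triples one level down, and then to pin down those sums by induction on the level. Write $s(I)=v_1(I)+v_2(I)+v_3(I)$ for the sum of the triple $\triangle(I)$. By the preceding lemma, if $|I|=n-1$ then the three children $\triangle(I,0),\triangle(I,1),\triangle(I,2)$ at level $n$ all have third coordinate equal to $s(I)$, and every level-$n$ index has a unique parent at level $n-1$. Hence
\[
\delta_n^3(m)=3\cdot\#\{I:\ |I|=n-1,\ s(I)=m\}.
\]
So the proposition amounts to showing that, for $n\ge 2$, the number $2n+1=2(n-1)+3$ is attained as the sum of a level-$(n-1)$ triple by exactly three indices. (For $n=1$ the relevant sum is $3$, attained only by the empty index, so in fact $\delta_1^3(3)=3$; thus the statement should be read with $2n+1\ge 5$, i.e.\ $n\ge 2$.)

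The heart of the argument is the following claim, to be proved by induction on $m\ge 1$: the minimum of $s(I)$ over all level-$m$ indices equals $2m+3$, it is attained by exactly three indices, and each such index has $\triangle(I)=(1,1,2m+1)$. The base case $m=1$ is the computation $\triangle(0)=\triangle(1)=\triangle(2)=(1,1,3)$. For the inductive step the key observation is that in any level-$\ell$ triple $(a,b,c)$ with $\ell\ge 1$ the entry $c$ is the sum of the three positive entries of the parent, so $c\ge a+b+1$; in particular $c$ is the strict maximum. Writing $s=a+b+c$, the children $\triangle(\cdot,0)=(a,b,a+b+c)$, $\triangle(\cdot,1)=(b,c,a+b+c)$, $\triangle(\cdot,2)=(c,a,a+b+c)$ have sums $2s-c$, $2s-a$, $2s-b$ respectively, and $2s-c=s+(a+b)$ is strictly the smallest. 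From $c\ge a+b+1$ we get $s\ge 2(a+b)+1\ge 5$. Now run the induction: if a level-$m$ triple has $s\ge 2m+4$ (i.e.\ is not minimal), then all of its children have sum $\ge 2s-c=s+(a+b)\ge s+2\ge 2m+6>2m+5$; whereas the minimal level-$m$ triple $(1,1,2m+1)$ has the child $(1,1,2m+3)$ of sum $2m+5$ and the two children $(1,2m+1,2m+3)$, $(2m+1,1,2m+3)$ of sum $4m+5\ge 2m+7$. Hence the minimum sum at level $m+1$ is $2m+5=2(m+1)+3$, and a level-$(m+1)$ index attains it precisely when it is a child $\triangle(I,0)$ of one of the three minimal level-$m$ indices $I$; since distinct parents have disjoint children this yields exactly three such indices, each with triple $(1,1,2m+3)=(1,1,2(m+1)+1)$. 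This closes the induction, and combining it with the displayed reduction gives $\delta_n^3(2n+1)=3\cdot 3=9$ for $n\ge 2$.

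I expect the inductive step to be the only real obstacle: one must rule out any contribution to the minimal level-$(m+1)$ sum coming from a non-minimal level-$m$ triple, and this is exactly what the inequality $c\ge a+b+1$ (valid because $c$ is the parent's total) buys us, since it forces $2s-c\ge s+2$ for every triple while keeping $4m+5>2m+5$ for the minimal one. One also has to check that the three minimal parents produce three genuinely distinct minimal children with no accidental collisions, but this is immediate because children of distinct indices are labelled by distinct tuples. The remaining ingredients are routine arithmetic with the recursion of the preceding lemma.
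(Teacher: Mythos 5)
Your proof is correct and follows essentially the same strategy as the paper: identify the triples $(1,1,2n+1)$ as the unique minimal-sum triples at each level, show there are exactly three of them, and observe that each contributes three third-coordinates equal to $2n+3$ at the next level. Your write-up is in fact tighter than the paper's at the one delicate point, the upper bound: the paper only asserts that triples with first or second entry exceeding $1$ cannot contribute, whereas your inequality $c\ge a+b+1$ (the third entry is the parent's total) cleanly rules out \emph{every} non-minimal triple, including those of the form $(1,1,c)$ with $c>2n+1$. Your observation that the statement fails at $n=1$ (where $\delta_1^3(3)=3$, not $9$) is also correct and consistent with the paper's own Table 1, so the proposition should indeed be read as holding for $n\ge 2$.
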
 

\begin{proof} We first show that if $N=\tau(0^n;1)$, then the corresponding triple is 
$(1,1,2n+1).$ The proof is by induction.  At the initial zero level, the initial triples $(1,1,1)$ as desired.  Then suppose  at the $n$th level the triple corresponding to $N=\tau(0^n;1)$ is the desired  $(1,1,2n+1).$  Then we have at the next level three triples descending from $(1,1,2n+1),$ namely $(1,1,2n+3), (1, 2n+1, 2n+3)$ and $(2n+1, 1, 2n + 3)$.  By the three-fold symmetry of the sequence, this means that 
$$\delta_{n+1}(2n+3) \geq 9.$$

Now to show that this number cannot be greater than $9$.  Suppose at level $n\geq 1$ there are only three triples of the form $(1,1,2n+1)$.  This is true at level one.  For any triple $(a_N, a_{N+1},a_{N+2})$ at level $n$ with $a_N$ or $a_{N+1}$ greater than one, then none of this triple's descendants will contribute to $\delta_{n+1}^3(2n+3).$  Then at the next level, we will only pick up three triples $(1,1,2n+3)$.   Putting all this together, means that the only triples at level $n$ that will have descendants contributing to $\delta_{n+1}^3(2n+3)$ must be of the form  $(1,1,2n+1)$, and each of these three triples  will contribute three $(2n+3)$s.  Thus we have our result.

\end{proof}

\begin{prop}
$$\delta_n(m) = \delta_{n+1}^1(m)   =   \delta_{n+1}^2(m) .$$
\end{prop}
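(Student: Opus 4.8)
The plan is to exploit the Lemma above, which tells us exactly how a triple $(a_N,a_{N+1},a_{N+2})$ at level $n$ propagates to three triples at level $n+1$. The crucial observation is that in each of the three descendant triples, the value $a_N+a_{N+1}+a_{N+2}$ always appears in the third slot (the $k=3$ position), while the first two slots ($k=1$ and $k=2$) are filled only by $a_N$, $a_{N+1}$, $a_{N+2}$ themselves — each of these three original values appearing exactly twice among the six first-and-second slots of the three children. So the multiset of values appearing in $k=1$ positions at level $n+1$, and likewise the multiset in $k=2$ positions, is obtained by taking each level-$n$ value with its level-$n$ multiplicity and doubling it, but split so that... — more precisely, I would track the count carefully.

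Concretely, first I would recall from the Lemma that for each level-$n$ triple the three level-$(n+1)$ triples have first coordinates $a_N$, $a_{N+1}$, $a_{N+2}$ and second coordinates $a_{N+1}$, $a_{N+2}$, $a_N$. Hence, summing over all level-$n$ triples, every value $m$ occurring in the level-$n$ sequence contributes to $\delta_{n+1}^1(m)$ exactly once for each occurrence of $m$ in any slot ($k=1,2,3$) of a level-$n$ triple; that is, $\delta_{n+1}^1(m)=\delta_n(m)$. The same count, using the second coordinates $a_{N+1},a_{N+2},a_N$, gives $\delta_{n+1}^2(m)=\delta_n(m)$. This is because as $(a_N,a_{N+1},a_{N+2})$ ranges over all level-$n$ triples, each of the three cyclic shifts $(a_N,a_{N+1},a_{N+2})$, $(a_{N+1},a_{N+2},a_N)$, $(a_{N+2},a_N,a_{N+1})$ just reshuffles which original entry lands in a given new first (resp. second) coordinate, so the total multiset of first coordinates at level $n+1$ is exactly the multiset of all entries (in all three positions) at level $n$, and likewise for second coordinates.

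The main step requiring care — and the only real obstacle — is bookkeeping: confirming that the union over the three children of their $k=1$ entries is precisely $\{a_N,a_{N+1},a_{N+2}\}$ as a multiset (each once), so that summing over all parent triples counts each level-$n$ entry exactly once regardless of which of the three slots it occupied in its parent triple. Once that is checked against the Lemma, both equalities $\delta_n(m)=\delta_{n+1}^1(m)$ and $\delta_n(m)=\delta_{n+1}^2(m)$ follow immediately, and transitivity gives $\delta_{n+1}^1(m)=\delta_{n+1}^2(m)$. I would also note the consistency check against the level-$0$ and level-$1$ data already tabulated: $\delta_0(1)=3$, and indeed $\delta_1^1(1)=\delta_1^2(1)=3$, while $\delta_0(m)=0$ for $m\neq 1$ matches $\delta_1^1(3)=\delta_1^2(3)=1$ coming from $\delta_0$... here one must be slightly careful, since $\delta_1^1(3)=1$ and $\delta_1^2(3)=1$ but $\delta_0(3)=0$ — so in fact the statement as written compares $\delta_n$ to $\delta_{n+1}^i$, and the genuine check is $\delta_1(1)=6=\delta_2^1(1)$, etc.; I would verify one such instance explicitly to make sure the indexing in the claimed identity is the intended one before writing the clean inductive argument.
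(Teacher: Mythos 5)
Your proposal is correct and is precisely the argument the paper intends: the paper's own proof is just the sentence ``This follows immediately from the construction of the sequence,'' and your bookkeeping via the Lemma --- that the three children of a parent triple $(a_N,a_{N+1},a_{N+2})$ have first coordinates $a_N,a_{N+1},a_{N+2}$ and second coordinates $a_{N+1},a_{N+2},a_N$, so the multiset of $k=1$ (resp.\ $k=2$) entries at level $n+1$ equals the multiset of all entries at level $n$ --- is exactly the omitted verification. The inconsistency you flagged at the end is a typo in the paper's in-text example: $\delta_1^1(3)$ and $\delta_1^2(3)$ equal $0$, not $1$ (as Table~1's row $(1,3)$ confirms), which is consistent with $\delta_0(3)=0$ and with the proposition as stated, so no re-indexing is needed.
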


\begin{proof}
This follows immediately from the construction of the sequence.
\end{proof}

\begin{prop}
$$\delta_{k+m}(2k+1) = 2^m \delta_k(2k+1).$$
\end{prop}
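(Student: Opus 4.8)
The plan is to reduce the statement to a single one-step recursion for $\delta_n$ together with an easy lower bound on the entry-sums of triples. First I would record how a fixed value $m$ propagates one level down. By the technical Lemma just above, a triple $(a_N,a_{N+1},a_{N+2})$ sitting at level $n$ gives rise at level $n+1$ to exactly the three triples $(a_N,a_{N+1},S)$, $(a_{N+1},a_{N+2},S)$, $(a_{N+2},a_N,S)$ with $S=a_N+a_{N+1}+a_{N+2}$. Inspecting these nine entries, each parent entry equal to $m$ occupies exactly two of the six ``inherited'' slots among the three children (the first and second coordinates), while $m$ occurs in the three ``sum'' slots precisely when $S=m$. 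Summing over all level-$n$ triples therefore yields
$$\delta_{n+1}(m)\;=\;2\,\delta_n(m)\;+\;3\,N_n(m),$$
where $N_n(m)$ denotes the number of level-$n$ triples whose three entries sum to $m$. (The same identity holds for the passage from level $0$ to level $1$, starting from the single triple $(1,1,1)$.)

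Next I would bound entry-sums: every triple at level $n$ has entry-sum at least $2n+3$. This is immediate at level $0$, where the only triple is $(1,1,1)$. For the inductive step, every level-$(n+1)$ triple has the shape $(x,y,S)$, where $x,y$ are two of the three entries of some level-$n$ parent and $S$ is that parent's entry-sum; since all entries are positive integers, $x+y\ge 2$, so the new entry-sum is $x+y+S\ge 2+(2n+3)=2(n+1)+3$.

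Finally I would combine the two ingredients. Fix $k$ and suppose $n\ge k$. By the bound just proved, every level-$n$ triple has entry-sum $\ge 2n+3\ge 2k+3>2k+1$, so $N_n(2k+1)=0$ and the recursion degenerates to $\delta_{n+1}(2k+1)=2\,\delta_n(2k+1)$. Iterating this for $n=k,k+1,\dots,k+m-1$ gives $\delta_{k+m}(2k+1)=2^m\,\delta_k(2k+1)$. The only point that needs genuine care — and it is slight — is pinning down the coefficient $2$ in the recursion: one must verify from the explicit list of the three child triples that a parent entry equal to $m$ lands in exactly two child slots regardless of which of the three parent positions it occupies, and that the contribution $3\,N_n(m)$ from the sum slot is disjoint from those. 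Everything else is a two-line induction plus an iteration.
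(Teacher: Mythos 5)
Your proof is correct and follows essentially the same route as the paper: your recursion $\delta_{n+1}(m)=2\,\delta_n(m)+3\,N_n(m)$ is exactly the paper's decomposition $\delta_{n+1}(m)=\delta_{n+1}^1(m)+\delta_{n+1}^2(m)+\delta_{n+1}^3(m)$ combined with $\delta_{n+1}^1(m)=\delta_{n+1}^2(m)=\delta_n(m)$, and the result follows once the sum-slot contribution is shown to vanish for $n\ge k$. The one genuine improvement is that your inductive entry-sum bound ($\ge 2n+3$ at level $n$) actually \emph{proves} that $\delta_n^3(2k+1)=0$ for $n>k$, a fact the paper merely asserts (``it is at the $k$th level where $2k+1$ can last occur in a $\delta^3$ term'').
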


\begin{proof}
It is at the $k$th level where $2k+1$ can last occur in a $\delta^3$ term.  Hence this follows from the previous proposition, since we have 
\begin{eqnarray*}
\delta_{k+m}(2k+1) &=&\delta_{k+m}^1(2k+1) + \delta_{k+m}^2(2k+1)+ \delta_{k+m}^3(2k+1)   \\
&=& \delta_{k+m-1}(2k+1) +  \delta_{k+m-1}(2k+1) + 0 \\
&=& 2 \delta_{k+m-1}(2k+1).
\end{eqnarray*}
\end{proof}

\begin{prop}
$$\delta_n(2n+1) = 9 + 2 \delta_{n-1}(2n+1)  $$
\end{prop}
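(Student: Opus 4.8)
The plan is to split $\delta_n(2n+1)$ into its three sub-counts according to the value of $k$, and then to recognize each piece via the propositions already established. Write
$$\delta_n(2n+1) = \delta_n^1(2n+1) + \delta_n^2(2n+1) + \delta_n^3(2n+1),$$
which is immediate from the definitions, since every occurrence of the value $2n+1$ at level $n$ happens in a slot with $k=1$, $k=2$, or $k=3$.

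First I would handle the $k=3$ term: for $n\geq 1$ we have $2n+1>1$, so the proposition giving $\delta_n^3(2n+1)=9$ applies directly and contributes the constant $9$. Next I would handle the $k=1$ and $k=2$ terms using the proposition $\delta_n(m)=\delta_{n+1}^1(m)=\delta_{n+1}^2(m)$: applying it with $n$ replaced by $n-1$ and $m=2n+1$ gives $\delta_n^1(2n+1)=\delta_{n-1}(2n+1)$ and $\delta_n^2(2n+1)=\delta_{n-1}(2n+1)$. Substituting these three identifications into the displayed decomposition yields
$$\delta_n(2n+1) = \delta_{n-1}(2n+1) + \delta_{n-1}(2n+1) + 9 = 9 + 2\,\delta_{n-1}(2n+1),$$
as claimed.

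Since every ingredient is already in hand, there is no real obstacle here; the proof is a bookkeeping combination of the preceding three propositions. The only point that deserves a word of care is the hypothesis needed for the $\delta_n^3$-statement, namely $2n+1>1$, which holds precisely for $n\geq 1$, so the recursion is asserted for $n\geq 1$ with $\delta_0(2n+1)$ (equivalently $\delta_{n-1}(2n+1)$ at $n=1$, i.e. $\delta_0(3)$) as the base input. One should also note in passing, to justify the clean split, that the recursive definition of the sequence places the "sum" entry $a_{\tau(I;3)}=a_{\tau_1}+a_{\tau_2}+a_{\tau_3}$ exactly in the $k=3$ position, so that the three counts $\delta_n^1,\delta_n^2,\delta_n^3$ genuinely partition $\delta_n$.
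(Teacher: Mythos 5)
Your proposal is correct and follows essentially the same route as the paper: decompose $\delta_n(2n+1)$ into $\delta_n^1+\delta_n^2+\delta_n^3$, substitute $9$ for the $k=3$ piece, and identify each of the other two pieces with $\delta_{n-1}(2n+1)$. The only cosmetic difference is that you cite the earlier proposition $\delta_{n-1}(m)=\delta_n^1(m)=\delta_n^2(m)$ by name, while the paper re-derives that doubling fact inline from the triple-generation rule.
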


\begin{proof}
We have already seen that $\delta_n^3(2n+1)=9.$  Since
$$\delta_n(2n+1) = \delta_n^1(2n+1) + \delta_n^2(2n+1) + \delta_n^3(2n+1)=  \delta_n^1(2n+1) + \delta_n^2(2n+1) +9,$$
all we need show is that $\delta_n^1(2n+1) + \delta_n^2(2n+1)= 2 \delta_{n-1}(2n+1).$  But this follows from the fact that each triple $a,b,c$  at the $k-1$st level generates  for the $k$th level the  triples 
$a,b,a+b+a$, or $b,c,a+b+c,$ or $c,a,a+b+c$ and each number appearing in the $n-1$st level appears twice in the $k$th level.

\end{proof}

Of course, it is easy to generate a table of various $\delta_K(n)$.  The following is a list of some examples.

\begin{center}
Table $1$
$$\begin{array}{c||c|c|c|c}

(K,n) & \delta_K(n) & \delta_K^1(n) & \delta_K^2(n)  & \delta_K^3(n)\\
\hline
(0,1) & 3 & 1 & 1 & 1\\
\hline
(1,1) & 6 & 3 & 3 & 0  \\
\hline
(1,3) &3& 0&0&3\\
\hline
(2,1) & 12 & 6 & 6 & 0 \\
\hline
(2,3) & 6 & 3 & 3 & 0 \\
\hline
(2,5) & 9 & 0 & 0 & 9 \\
\hline(3,1) & 24 & 12 & 12 & 0 \\
\hline
(3,1) & 24&12&12&0\\
\hline
(3,3) & 12 & 6 & 6 & 0\\
\hline
(3,5) & 18 & 9 & 9 & 0\\
\hline
(3,7)& 9&0&0&9 \\
\hline
(3,9)&18&0&0&18\\
\hline
(4,1)&48&24&24&0\\
\hline
(4,3)&24&12&12&0\\
\hline
(4,5)&36&18&18&0  \\
\hline
(4,7)&18&9&9&0 \\
\hline
(4,9)& 45&18&18&9\\
\hline
(4,13)& 36&0&0&36 \\
\hline
(4,15)&18&0&0&18\\
\hline
(4,17)&18&0&0&18\\
\hline
(5,1)&96&48&48&0\\
\hline
(5,3)&48&24&24&0\\
\hline
(5,5)&72&36&36&0\\
\hline
(5,7) & 36&18&18&0\\
\hline
(5,9)&90&45&45&0\\
\hline
(5,11)&9&0&0&9\\
\hline
(5,13)& 72&36&36&0\\
\hline
(5,15) & 36&18&18 &0\\
\hline
(5,17) & 72&18&18&36 \\
\hline
(5,19)& 18&0&0&18 \\
\hline
(5,21) &18&0&0&18 \\
\hline
(5,23)& 18&0&0&18 \\
\hline
(5,25)& 72&0&0&72 \\
\hline
(5,29)&36&0&0&36\\
\hline
(5,31) & 18&0&0&18 

\end{array}$$
\end{center}

\section{Paths of a directed graph}\label{sec:Paths of a directed graph}
There is a simple interpretation of  Stern's triatomic sequence as the number of paths in a directed graph from three initial vertices.  Recall the diagrams in section 3.  Starting with a triangle $\triangle$ with  vertices $v_1,v_2,v_3$, we systematically constructed a subdivision of $\triangle$ so that at  level $n$, we have $3^n$ triangles.  It is quite easy to put the structure of a directed graph on this subdivision.  Suppose we have one of the subtriangles $\triangle(I)$, where recall $I$ is an $n$-tuple consisting of $0,1$ or $2$.  Its vertices are denoted by $v_1(I), v_2(I)$ and $v_3(I).$ Then for level $n+1$, we add one more vertex, denoted by $v_1(I) + v_2(I)+v_3(I)$ and three directed paths, one from each $v_1(I)$ to the new vertex.  Finally, note that there are no directed paths between any of the three initial vertices.  Recall our earlier notation of writing any positive integer $N$ as $N=\tau(j_1, j_{2}, \ldots j_n;k)$ if

$N=3(1 + 3 + 3^2 + + 3^{n-1} ) + j_{1}3^n + j_{2}3^{n -1} + \cdots  + j_n3 + k,$
   where each $j_i$ is zero, one or two and $l$ is chosen to be as large as possible from the set $\{1,2,3\}$.

\begin{theorem}
For any positive integer $N$,  the number $a_N$ is the number of paths from $v_1, v_2$ and $v_3$ to the $l$th vertex of the  $\triangle(j_1, \ldots , j_n),$ save for when the corresponding vertex of $\triangle(j_n, \ldots , j_1)$ is one of the initial $v_1, v_2$ or $v_3$.
\end{theorem}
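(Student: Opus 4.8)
The plan is to prove this by induction on the level $n$, tracking the combinatorial meaning of the matrices $A_0, A_1, A_2$ as encoding the edges of the directed graph. First I would set up the base case: at level $0$ the three triangles... wait, there is a single triangle $\triangle$ with vertices $v_1, v_2, v_3$, and by construction there are no directed edges among these three vertices, so the only way to ``reach'' $v_i$ is the trivial empty path — this is the exceptional case flagged in the statement, and it matches $a_1 = a_2 = a_3 = 1$ (with $v_i = 1$, each ``is'' the number of trivial paths, which is why the statement excepts it). For $n = 1$, the new vertex $v_1(I)+v_2(I)+v_3(I) = v_1+v_2+v_3$ has exactly three incoming edges, one from each of $v_1, v_2, v_3$, so the number of paths ending there is $3 = a_{\tau(i_1;3)}$, agreeing with the computed values.

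Next, for the inductive step, I would fix an $n$-tuple $I = (j_1,\ldots,j_n)$ and assume that for every shorter tuple (equivalently, for every vertex appearing at level $\le n$ that is not one of the original $v_1, v_2, v_3$) the number of directed paths from $\{v_1,v_2,v_3\}$ to that vertex equals the corresponding $a_N$. Now consider the vertices of $\triangle(I)$. The key observation is that the recursion $\triangle(I,i) = \triangle(I)A_i$ exactly mirrors the graph construction: the first two vertices of $\triangle(I,i)$ are two of the old vertices $v_1(I), v_2(I), v_3(I)$ (relabeled according to $A_i$), and the third vertex of $\triangle(I,i)$ is the newly added vertex $v_1(I)+v_2(I)+v_3(I)$, which receives exactly one new directed edge from each of $v_1(I), v_2(I), v_3(I)$. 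Hence the number of paths to this new vertex is the sum of the numbers of paths to $v_1(I), v_2(I), v_3(I)$ — which is precisely the third-coordinate recursion $a_{\tau(I,i;3)} = a_{\tau_1} + a_{\tau_2} + a_{\tau_3}$ from the definition. For the first two coordinates, the number of paths to $v_\ell(I,i)$ equals the number of paths to whichever of $v_1(I), v_2(I), v_3(I)$ it equals (no new edges point into old vertices), matching the cases $j_1 = i$, $l \in \{1,2\}$ of the recursive definition of $a_N$. Combined with the earlier Proposition $a_N = v_I(k)$, this closes the induction.

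I would structure the write-up so that the main work is a careful bookkeeping lemma: for a tuple ending in $i$, the $\ell$-th vertex of $\triangle(I,i)$ either coincides with one of the vertices of $\triangle(I)$ (when $\ell \in \{1,2\}$) or is the fresh ``sum'' vertex (when $\ell = 3$), and in the latter case its in-edges come exactly from the three vertices of $\triangle(I)$. Given that lemma, comparing with the piecewise definition of $a_N$ is immediate because the permutation of $\{v_1(I), v_2(I), v_3(I)\}$ induced by each $A_i$ is literally the permutation recorded in the $\tau_1, \tau_2, \tau_3$ cases.

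The main obstacle I anticipate is not mathematical depth but notational precision: one must be careful that a single geometric vertex may be shared by several subtriangles $\triangle(I)$ across different tuples, so ``the number of paths to the $\ell$-th vertex of $\triangle(I)$'' is well-defined as a property of the vertex and does not depend on which triangle names it. This requires checking that whenever two tuples $I, I'$ produce (as their $\ell$-th and $\ell'$-th entries) the same vertex, the corresponding values $a_{\tau(I;\ell)}$ and $a_{\tau(I';\ell')}$ agree — which follows from the three-fold symmetry proposition and the consistency of the subdivision, but deserves an explicit remark. Aside from that, the argument is a direct unraveling of the matrices $A_i$ as adjacency data.
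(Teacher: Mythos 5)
Your proposal is correct and is exactly the induction the paper has in mind when it declares ``The proof is straightforward'' and omits it: each $A_i$ relabels two old vertices and creates one new sum vertex whose three in-edges come from the vertices of the parent triangle, so the path count satisfies the same recursion as $a_N$, with the initial vertices excepted because their ``count'' is the trivial path. Your closing remark on the well-definedness of the path count when a geometric vertex is shared by several subtriangles $\triangle(I)$ is a genuine point of care that the paper does not address and is worth keeping.
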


The proof is straightforward.

\section{Questions}
The Farey map is only one type of multidimensional continued fraction.  (For background on multi-dimensional continued fractions, see Schweiger \cite{Schweiger1}).  Recently,  Dasaratha et al. \cite{DasarathaFlapanGarrityLeeMihailaNeumann-Chun-Peluse-Stoffregen1} produced a family of multi-dimensional continued fractions  which includes many previously well-known algorithms, though not the Farey map.  Using this family, Dasaratha et al. \cite{Dasaratha et al 12 d} study analogs of Stern's diatomic sequence.  Independently, Goldberg \cite{Goldberg12} has started finding analogs for the Monkemayer map.   It would also be interesting to find the Stern analogs in the language of Lagarias \cite{Lagarias93}.

Also, there are many properties of Stern's diatomic sequences whose analogs for the Farey map have yet to be discovered.  Probably the most interesting would be to find the analog of the link between Stern's sequence and the number of hyperbinary representations there are for positive integers, as discussed by Northshield \cite{Northshield1}.

Another interesting problem is to attempt a multidimensional analog of the recently found link between the Tower of Hanoi graph and Stern's  diatomic sequence by Hinz, Klav\v{z}ar, Milutinovi\'{c}, Parisse and  Petr \cite{AH}.

Finally, it would be interesting to extend the polynomial analogs of Stern's diatomic sequence (as in the work of Dilcher and Stokarsky \cite{Dilcher-Stokarsky07, Dilcher-Stokarsky09},  of Klav\v{z}ar, Milutinovi\'{c}
and Petr \cite{Klavzar-Milutinovic-Petr07}, of Ulas  \cite{Ulas-Ulas11, Ulas12}, of Vargas \cite{Vargas12}, and of Allouche and Mend\`{e}s France \cite{Allouche-Mendes France12}) to finding polynomial analogs for triatomic sequences.

\section{Acknowledgments}
We would like to thank the referee for major help in improving the exposition of this paper, and catching a number of errors.

\end{singlespacing}

\end{document}